 \documentclass[12pt]{article}


\parskip 0.3cm
\setlength{\parindent}{.6cm}
\usepackage[letterpaper, hmargin=1in, top=1in, bottom=1.2in, footskip=0.7in]{geometry}




\usepackage{rotating,pdflscape,xcolor,amssymb,subfigure,psfrag,amsmath,eufrak,bbm,epsfig,amsthm,mathtools,fancyhdr,graphicx}
\definecolor{vdarkred}{rgb}{0.6,0,0.2}
\definecolor{vdarkblue}{rgb}{0,0.2,0.6}
\usepackage[pdftex, colorlinks, linkcolor=vdarkblue,citecolor=vdarkred]{hyperref}
\usepackage{a4wide,amscd}
\usepackage{tikz} 
 \usepackage[usenames,dvipsnames]{pstricks}
 \usepackage{pst-grad} 
 \usepackage{pst-plot} 

\usepackage[small]{caption}








\makeatletter
\def\z@first#1#2{#1}
\def\z@second#1#2{#2}
\def\z@zp@selectchar#1#2{
	\IfStrEqCase{#2}{%
		{p}{#1{(}{)}}%
		{P}{#1{)}{(}}%
		{c}{#1{[}{]}}%
		{C}{#1{]}{[}}%
		{a}{#1{\{}{\}}}%
		{A}{#1{\}}{\{}}%
		{i}{#1{[}{]}\!#1{[}{]}}%
		{I}{#1{]}{[}\!#1{]}{[}}%
		{t}{#1{<}{>}}%
		{T}{#1{>}{<}}%
		{b}{#1{|}{|}}%
		{n}{#1{\|}{\|}}%
		{v}{#1{.}{.}}%
	}[#1{(}{)}]%
}
\def\z@zp#1#2\fin#3{
	\z@zp@selectchar{\left\z@first}{#1}#3
	\zifempty{#2}%
	{\z@zp@selectchar{\right\z@second}{#1}}%
	{\z@zp@selectchar{\right\z@second}{#2}}%
}
\newcommand{\zp}[2][p]{\zifempty{#1}{\left(#2\right)}{\z@zp#1\fin{#2}}}
\makeatother

\usepackage{xstring}
\def\zifempty#1#2#3{\def\foo{#1}\ifx\foo\empty\relax#2\else#3\fi}





\newcommand{\lam}{\lambda}

\newcommand{\vfi}{\varphi}
\newcommand{\al}{\alpha}

\newcommand{\del}{\delta}

\newcommand{\ld}{\ldots}

\newcommand{\bgt}{\begin{itemize}}
\newcommand{\ent}{\end{itemize}}

\newcommand{\ds}{\displaystyle}

\newcommand{\brem}{\begin{rmk}}
\newcommand{\erem}{\end{rmk}}
\newcommand{\blem}{\begin{lem}}
\newcommand{\elem}{\end{lem}}
\newcommand{\bcor}{\begin{cor}}
\newcommand{\ecor}{\end{cor}}
\newcommand{\bTh}{\begin{Th}}
\newcommand{\eTh}{\end{Th}}
\newcommand{\bpropo}{\begin{propo}}
\newcommand{\epropo}{\end{propo}}

\newcommand{\op}{\operatorname}

\newcommand{\ninf}{\underset{n\to\infty}{\longrightarrow}}

\newcommand{\E}{\op{\mathbb{E}}}

\newcommand{\R}{\mathbb{R}}

\newcommand{\p}{\mathbb{P}}

\newcommand{\f}{\frac}
\newcommand{\ff}{\frac{1}}

\newcommand{\ste}{\, ;\, }

\newcommand{\bbm}{\begin{bmatrix}}
\newcommand{\ebm}{\end{bmatrix}}
\newcommand{\bes}{\begin{equation*}}
\newcommand{\ees}{\end{equation*}}
\newcommand{\be}{\begin{equation}}
\newcommand{\ee}{\end{equation}}
\newcommand{\beqy}{\begin{eqnarray}}
\newcommand{\eeqy}{\end{eqnarray}}
\newcommand{\beq}{\begin{eqnarray*}}
\newcommand{\eeq}{\end{eqnarray*}}

\newcommand{\bpm}{\begin{pmatrix}}
\newcommand{\epm}{\end{pmatrix}}

\newcommand{\cd}{\cdots}

 %
 %

\newtheorem{Th}{Theorem}[section]

\newtheorem{propo}[Th]{Proposition}
\newtheorem{Prop}[Th]{Proposition}

\newtheorem{lem}[Th]{Lemma}

\newtheorem{cor}[Th]{Corollary}

\theoremstyle{definition}
\newtheorem{rmk}[Th]{Remark}
\newtheorem{Def}[Th]{Definition}

\long\def\symbolfootnote[#1]#2{\begingroup
\def\thefootnote{\fnsymbol{footnote}}\footnote[#1]{#2}\endgroup} 
\providecommand{\keywords}[1]
{
	\small	
	\textbf{\textit{\footnotesize Keywords ---}} #1
}

\makeatletter
\def\@addpunct#1{%
	\relax\ifhmode
	\ifnum\spacefactor>\@m \else#1\fi
	\fi}
\newcommand{\keywordsname}{Key words}
\def\@setkeywords{%
	{\itshape \keywordsname.}\enspace \@keywords\@addpunct.}
\def\keywords#1{\def\@keywords{#1}}
\let\@keywords=\@empty
\g@addto@macro{\maketitle}{\begingroup%
	\let\@makefnmark\relax  \let\@thefnmark\relax%
	\ifx\@keywords\@mpty\else\@footnotetext{\@setkeywords}\fi%
	\endgroup}
\makeatletter
\def\blfootnote{\gdef\@thefnmark{}\@footnotetext}
\usepackage[affil-it]{authblk}
\date{}
\author{Cambyse Pakzad} 
\title{Extremes of Chi triangular array from the Gaussian $\beta$-Ensemble at high temperature}

\newcommand{\Addresses}{{
		\bigskip
		\footnotesize
		\textsc{MAP 5, UMR CNRS 8145 - Universit\'e Paris Descartes, France}\par\nopagebreak
		\textit{E-mail address}: \texttt{cambyse.pakzad@gmail.com} 
	}}
		\keywords{Gaussian $\beta$-Ensembles, Poisson statistics,  Extreme value theory.}
\begin{document}
	\maketitle
\blfootnote{\textup{2010} \textit{Mathematics Subject Classification}.
	60B20 - 60F05 - 60G70}
\begin{abstract}We study the extreme point process associated to the off-diagonal components in the matrix representation \cite{MatrixModelBetaEnsemble} of the Gaussian $\beta$-Ensemble and prove its convergence to Poisson point process as $n\to +\infty$ when the inverse temperature $\beta$ scales with $n$ and tends to $0$. We consider two main high temperature regimes: $\ds{\beta\ll \ff{n}}$ and $\ds{n\beta= 2\gamma \geq 0}$. The normalizing sequences are explicitly given in each cases. As a consequence, we estimate the first order asymptotic of the largest eigenvalue of the Gaussian $\beta$-Ensemble.
\end{abstract}
\section{Introduction}
Gaussian ensembles, namely Hermitian matrices with independent Gaussian entries whose joint distribution invariant is under conjugation by appropriate unitary matrices, play a central role in the realm of Random Matrix Theory. Regarding their spectrum, the joint distribution of the eigenvalues is available through the formula:
\begin{align}\label{def_beta_ens}
P_{n,\beta}(d\lam_1,...,d\lam_n):= \ff{Z_{n,\beta}} \exp\left( -\frac{\beta}{4} \sum_{i=1}^{n}\lam^2_i\right)  \prod_{i<j}^{n}\left| \lam_j-\lam_i\right| ^\beta \prod_{i=1}^{n}\mathrm{d}\lam_i.
\end{align}
According to the Dyson index $\beta$, the density (\ref{def_beta_ens}) describes the eigenvalues of the Gaussian Orthogonal Ensemble $(\beta=1)$, the Gaussian Unitary Ensemble $(\beta=2)$, or the Gaussian Symplectic Ensemble $(\beta=4)$. Although it seemed that no natural matrix model could be associated to the Gaussian $\beta$-Ensemble, a collection of $n$ points with distribution (\ref{def_beta_ens}) and general $\beta>0$, the gap is bridged many years later by Edelman and Dumitriu in \cite{MatrixModelBetaEnsemble}. Through the Househölder method, they constructed the following tridiagonal symmetric random matrix:
\begin{align}\label{H}
 H_{n,\beta}:=\ff{\sqrt{\beta}} \begin{pmatrix}
\mathcal{N}(0,2) &\chi\left( \left( n-1\right) \beta\right)  & && \\
\chi\left( \left( n-1\right) \beta\right) &\mathcal{N}(0,2) &\chi\left( \left( n-2\right) \beta\right) &&\\
&\chi\left( \left( n-2\right) \beta\right) &\mathcal{N}(0,2) & \chi\left( \left( n-3\right) \beta\right) &  \\
& & \ddots &\ddots & \ddots &    \\
& &\qquad  \ddots & \qquad \ddots &\chi\left(  \beta\right)  &   \\
& & &\chi\left( \beta\right) & \mathcal{N}(0,2) 
\end{pmatrix} 
\end{align} with independence between all variables, such that for any $n\geq 1$ and $\beta> 0$, the eigenvalues $(\lam_1,...,\lam_n)$ of $H_{n,\beta}$ have exactly joint distribution $P_{n,\beta}$ from (\ref{def_beta_ens}).

Let us consider the off-diagonal sequence $\left( X_{i,n}\right)_{i\leq n} $ of $H_{n,\beta}$, namely  $\left( X_{i,n}\right) $ are independent random variables with distribution $\ds{X_{i,n}\sim \chi\left( i\beta\right) }$ for $\ds{i\leq n}$. 

The scope of this paper is to study the asymptotic behavior of the largest $X_{i,n}$, namely of the extreme point process $\ds{\sum_{i=1}^{n}\delta_{a_n\left( X_{i,n}-b_n\right)}}$ where $(a_n)$, $(b_n)$ are suitably chosen normalizing sequences and when the inverse temperature $\beta:=\beta_n$ scales with $n$ in such a way that $\beta \xrightarrow[n\infty]{} 0$. We consider two regimes in this context. In first place, we study the case $\ds{n^{-2}\ll \beta \ll {n}^{-1}}$ which itself divides into two subregimes around $\ds{\left( n\log\log n\right)^{-1} }$. Then, the intermediate high temperature regime $\ds{n\beta =2\gamma}$ is analyzed for $\gamma \in (0,1)$. Both cases lead to an inhomogeneous limiting Poisson point process on the half line $\R^+$ with intensity measure of exponential kind. The point process approach is borrowed from the Extreme Value Theory (see \cite{Leadbetter,Resnick}) as it shows to be a convenient way to encompass many informations. In this vein, our results are stated in terms of Poisson convergence. One consequence of this fact is that one can derive the limiting distribution of the rescaled largest component to be Gumbel. We exploit here the quantitative growth rate of the largest entries in (\ref{H}) to provide an estimate on the first order asymptotic of the largest eigenvalue in the high temperature regime.

The question of the normalizing (or scaling) sequences is fundamental. They indicate the precise growth rate of the maximum of the set of random variables considered, which, roughly speaking, provides the right way to look at the extreme values. Although they are not always explicit or with closed form, we furnish here exact formulae. The case $\ds{n\beta =2\gamma}$ has a nice interpretation: the scaling sequences for $\ds{\max_{1\leq i \leq n}X_{i,n}}$ thoroughly match with those of the maximum of $\ds{\frac{n}{\gamma \log\log(n)}}$ i.i.d. copies of $X_{n,n}\sim \chi(2\gamma)$.

We now state our main result in the case $\ds{n^{-2}\ll \beta \ll {n}^{-1}}$:
\begin{Th}
	\label{Main_th}
	Let $\beta := \beta_n \ll 1$ such that $\ds{\ff{n^2}\ll \beta \ll \ff{n } }$ and $\ds{X_{i,n}\sim \chi \left( i\beta\right)  }$, with $1\leq i \leq n$, a triangular array of independent random variables. 
	
	Assume $\ds{\ff{n^2}\ll \beta \ll \ff{n\log \log \left( n\right) }  }$. Let the scaling sequences: \begin{align}\label{defTH_scaling_1}
	a_n={\sqrt{2\log \left( n^2 \beta \right) }},\qquad b_n=\sqrt{2\log \left( n^2 \beta \right) } - \frac{\log \log\left(  n^2 \beta\right)  }{\sqrt{2\log \left( n^2 \beta \right) }}.
	\end{align}
	Then the point process $\ds{\sum_{i=1}^{n}\delta_{a_n\left( X_{i,n}-b_n\right)}}$ converges weakly to an inhomogeneous Poisson point process on $\R^+$ with intensity measure $\ds{\frac{e^{-x}}{4}\mathrm{d}x}$.
	
	Assume $\ds{ \ff{n\log \log \left( n\right) } \ll \beta \ll \ff{n} }$. Let the scaling sequences:
	\begin{align}\label{defTH_scaling_2}a_n=\sqrt{2\log \left( n \right) },\qquad b_n=a_n - \frac{n\beta \log \log\left(  n\right)  }{\sqrt{2\log \left( n \right) }}-\frac{\log \log  \left( n\right) }{\sqrt{2\log \left( n \right) }}-\frac{\log \log \log \left( n\right) }{\sqrt{2\log \left( n \right) }}.\end{align}Then the point process $\ds{\sum_{i=1}^{n}\delta_{a_n\left( X_{i,n}-b_n\right)}}$ converges weakly to an inhomogeneous Poisson point process on $\R^+$ with intensity measure $\ds{e^{-x}\mathrm{d}x}$.
\end{Th}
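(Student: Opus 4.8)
The plan is to prove the Poisson limit through the standard criterion for superpositions of row-wise independent random variables (the null-array version of Kallenberg's theorem, as used in extreme value theory; see \cite{Resnick,Leadbetter}). For each fixed $n$ the variables $(X_{i,n})_{i\le n}$ are independent, so $\ds{\sum_{i=1}^{n}\delta_{a_n(X_{i,n}-b_n)}}$ converges weakly to a Poisson point process of intensity $\mu$ on $\R^+$ as soon as two conditions hold: (i) the intensity measures converge, $\ds{\sum_{i=1}^{n}\p\big(a_n(X_{i,n}-b_n)\in\,\cdot\,\big)\longrightarrow\mu}$ vaguely, and (ii) the array is null, $\ds{\max_{1\le i\le n}\p\big(a_n(X_{i,n}-b_n)>x\big)\to0}$ for every $x$. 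Since the half-lines $(x,\infty)$ form a convergence-determining class and here $\mu((x,\infty))=\tfrac14 e^{-x}$ (resp. $e^{-x}$), the whole problem reduces to the scalar asymptotic $\ds{\sum_{i=1}^{n}\p\big(X_{i,n}>b_n+x/a_n\big)\longrightarrow\mu((x,\infty))}$ together with the (easier) negligibility bound (ii).

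The analytic core is a sharp tail estimate for $X_{i,n}\sim\chi(i\beta)$, uniform in $i$. Starting from the density $\ds{f_i(u)=\frac{u^{i\beta-1}e^{-u^2/2}}{2^{i\beta/2-1}\Gamma(i\beta/2)}}$ on $\R^+$ and integrating by parts once, I would establish
\[
\p(X_{i,n}>t)=\frac{t^{i\beta-2}e^{-t^2/2}}{2^{i\beta/2-1}\Gamma(i\beta/2)}\,(1+o(1)),\qquad t\to\infty,
\]
uniformly over $1\le i\le n$, the point being that $i\beta\le n\beta\ll1$ keeps the exponent bounded. Because the shape parameter is small, the prefactor is governed by $\ds{\frac{1}{2^{i\beta/2-1}\Gamma(i\beta/2)}=i\beta\,(1+o(1))}$ (using $\Gamma(z)\sim z^{-1}$ as $z\to0$), so that $\ds{\p(X_{i,n}>t)\sim i\beta\,t^{i\beta-2}e^{-t^2/2}}$. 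This is the estimate that must be made quantitative and uniform so that the errors survive summation.

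Summation is where the two subregimes part ways. Writing $t=b_n+x/a_n\to\infty$ and inserting the tail estimate gives
\[
\sum_{i=1}^{n}\p(X_{i,n}>t)\;\sim\;\frac{\beta\,e^{-t^2/2}}{t^2}\sum_{i=1}^{n} i\,t^{i\beta}.
\]
When $\ds{n^{-2}\ll\beta\ll(n\log\log n)^{-1}}$ one has $n\beta\log t\to0$, hence $t^{i\beta}=1+o(1)$ uniformly and $\ds{\sum_i i\,t^{i\beta}\sim n^2/2}$; substituting \eqref{defTH_scaling_1} (for which $\tfrac{t^2}{2}=\log(n^2\beta)+x-\log\log(n^2\beta)+o(1)$) collapses the right-hand side to $\tfrac14 e^{-x}$. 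When $\ds{(n\log\log n)^{-1}\ll\beta\ll n^{-1}}$ one instead has $n\beta\log t\to\infty$ while $\beta\log t\to0$, so $\ds{\sum_i i\,t^{i\beta}}$ is a near-geometric sum dominated by the top indices $i\approx n$ and equals $\ds{\frac{n\,t^{n\beta}}{\beta\log t}(1+o(1))}$, whence $\ds{\sum_i\p(X_{i,n}>t)\sim\frac{n\,t^{n\beta}\,e^{-t^2/2}}{t^{2}\log t}}$. The scaling \eqref{defTH_scaling_2} is calibrated precisely so that the expansion of $\tfrac{t^2}{2}$ to the order of the iterated logarithms cancels the $t^{n\beta}$, $t^{-2}$ and $(\log t)^{-1}$ contributions and leaves $e^{-x}$; the number of effectively contributing indices is $\asymp(\beta\log t)^{-1}$, which parallels the i.i.d.\ count described in the introduction.

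The main obstacle is this last matching in the warm subregime. Unlike the cold case, where the crude $\sum_i i\sim n^2/2$ suffices, here one must simultaneously track the growing factor $t^{i\beta}$, the polynomial factor $t^{-2}$, and the $(\log t)^{-1}$ produced by the geometric tail of the sum; each of these is responsible for one of the successive corrections appearing in $b_n$, and only their exact cancellation produces the constant intensity rather than a diverging or vanishing limit. Consequently the delicate step is to control the tail expansion uniformly in $i$ strongly enough that the error terms remain negligible after being summed against the $\asymp(\beta\log t)^{-1}$ non-negligible summands. Once this is in hand the null-array condition (ii) is immediate, since the largest single probability is $\ds{\max_i\p(X_{i,n}>t)\sim n\beta\,t^{n\beta-2}e^{-t^2/2}}$, a factor $\asymp\beta\log t\to0$ smaller than the $O(1)$ total, and the Poisson convergence follows.
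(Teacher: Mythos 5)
Your route is essentially the paper's: you reduce the point--process statement to the convergence of $\sum_{i}\p\big(X_{i,n}>b_n+x/a_n\big)$ plus a null-array condition (this is exactly Theorem \ref{Th_tool}); you use the same uniform tail asymptotic $\p(X_{i,n}>t)\sim i\beta\,t^{i\beta-2}e^{-t^2/2}$ (the paper obtains it from the two-sided incomplete-Gamma bound of Lemma \ref{bounds_incomplete_lem} together with $\Gamma(u)\sim u^{-1}$; your single integration by parts is equivalent); and you evaluate the same weighted geometric sum $\sum_i i\,t^{i\beta}$ with the same dichotomy at $n\beta\log t$ (the paper uses the closed form \eqref{sum} and Lemma \ref{lambda_estimate}, you argue asymptotically). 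Your cold subregime, including the constant $\tfrac14$, and your negligibility estimate both check out.

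The genuine gap is the one step you assert rather than perform: the claim that \eqref{defTH_scaling_2} is ``calibrated precisely'' in the warm subregime. Carrying out the substitution in your (correct) asymptotic $\sum_i\p(X_{i,n}>t)\sim n\,t^{n\beta}e^{-t^2/2}/(t^2\log t)$ with $t=b_n+x/a_n$ gives
\begin{align*}
\frac{t^2}{2}&=\log n+x-n\beta\log\log n-\log\log n-\log\log\log n+o(1),\\
n\beta\log t&=\tfrac12\,n\beta\log\log n+o(1),
\end{align*}
so the exponent $\log n+n\beta\log t-\tfrac{t^2}{2}-2\log t-\log\log t$ equals $-x+\tfrac32\,n\beta\log\log n+o(1)$, which diverges since $n\beta\log\log n\to\infty$ in this subregime; the sum does not tend to $e^{-x}$. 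The point is that the $n\beta$-correction in $b_n$ enters $t^2/2$ with full weight and a minus sign, while $t^{n\beta}$ only contributes $\tfrac12\,n\beta\log\log n$, so exact cancellation would require $b_n=a_n+\frac{n\beta\log\log n}{2a_n}-\frac{\log\log n}{a_n}-\frac{\log\log\log n}{a_n}$. You must either rederive the centering or carry out this verification explicitly; note that this is also precisely where the paper itself is inconsistent (its display \eqref{asymptotic1} has the opposite sign on $n\beta\log\log n$ from what \eqref{defTH_scaling_2} produces, and even then the $n\beta$ terms in \eqref{last_step2} do not cancel), so a blind appeal to ``the scaling works'' is exactly the step that cannot be waved through.
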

In the intermediate high regime $\ds{n\beta=2\gamma}$, we have the following result:
\begin{Th}
	\label{Main_th2}
	Let $\ds{\gamma \in (0,1)}$ and $\ds{ \beta_n = {2\gamma}n^{-1}}$. Let $\ds{X_{i,n}\sim \chi \left( i\beta_n\right)  }$, with $1\leq i \leq n$, a triangular array of independent random variables. Let the scaling sequences: \begin{align}\label{defTH_scaling_3}
	a_n:={\sqrt{2\log ( \frac{n}{\gamma\log \log(n)})}},\quad  b_n:=a_n + \frac{ \left( \frac{\gamma}{2}-1\right) \log\log(\frac{n}{\gamma\log \log(n)})-\log\left( 2^{-\frac{\gamma}{2}}\Gamma(\gamma)\right) }{\sqrt{2\log ( \frac{n}{\gamma\log \log(n)})}}.
	\end{align}
	Then the point process $\ds{\sum_{i=1}^{n}\delta_{a_n\left( X_{i,n}-b_n\right)}}$ converges weakly to an inhomogeneous Poisson point process on $\R^+$ with intensity measure $\ds{{e^{-x}}{}\mathrm{d}x}$. 
\end{Th}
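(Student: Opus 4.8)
The plan is to invoke the classical criterion for Poisson convergence of a null array of independent point masses (Kallenberg's theorem, in the form used in extreme value theory, see \cite{Resnick,Leadbetter}). For each fixed $n$ the variables $X_{1,n},\dots,X_{n,n}$ are independent, so $N_n:=\sum_{i=1}^n\delta_{a_n(X_{i,n}-b_n)}$ is a superposition of independent point processes; it converges weakly to the Poisson point process on $\R^+$ with intensity $e^{-x}\,\mathrm{d}x$ as soon as, for every half-open interval $(x,y]\subset\R^+$,
\be
\sum_{i=1}^n \p\big(a_n(X_{i,n}-b_n)\in(x,y]\big)\to\int_x^y e^{-u}\,\mathrm{d}u
\quad\text{and}\quad
\max_{1\le i\le n}\p\big(a_n(X_{i,n}-b_n)\in(x,y]\big)\to 0.
\ee
By additivity of the first condition it suffices to treat the tail functional $\Lambda_n(x):=\sum_{i=1}^n\p\big(X_{i,n}>b_n+\tfrac{x}{a_n}\big)$ and to prove $\Lambda_n(x)\to e^{-x}$ for each $x$; the second (negligibility) condition will then reduce to showing that the largest single summand tends to $0$.

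The analytic core is the precise evaluation of $\Lambda_n(x)$. First I would record the sharp chi tail in incomplete–gamma form: writing $T:=\tfrac12(b_n+x/a_n)^2$ and using $\p(X_{i,n}>t)=\Gamma\!\big(\tfrac{\gamma i}{n},\tfrac{t^2}{2}\big)/\Gamma\!\big(\tfrac{\gamma i}{n}\big)$ together with the asymptotic $\Gamma(a,T)\sim T^{a-1}e^{-T}$ as $T\to\infty$, made uniform as the shape $a=\gamma i/n$ ranges over $(0,\gamma]$, one gets
\be
\p\Big(X_{i,n}>b_n+\frac{x}{a_n}\Big)\sim\frac{T^{\gamma i/n-1}\,e^{-T}}{\Gamma(\gamma i/n)},\qquad T=\frac12\Big(b_n+\frac{x}{a_n}\Big)^2 .
\ee
Since $T$ is independent of $i$, the factor $e^{-T}/T$ pulls out and the problem collapses to the asymptotics of the single weighted sum $S_n:=\sum_{i=1}^n T^{\gamma i/n}/\Gamma(\gamma i/n)$.

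Next I would analyze $S_n$ by Laplace's method (equivalently Euler--Maclaurin). The summand increases in $i$ and concentrates near $i=n$: with $i=n-j$ it behaves like $\tfrac{T^\gamma}{\Gamma(\gamma)}e^{-\gamma j\log T/n}$, a geometric profile of effective length $\approx n/(\gamma\log T)$, so that summing the geometric series yields
\be
\Lambda_n(x)\sim\frac{e^{-T}}{T}\,S_n\sim\frac{n\,T^{\gamma-1}e^{-T}}{\gamma\,\Gamma(\gamma)\,\log T},\qquad S_n=\sum_{i=1}^n\frac{T^{\gamma i/n}}{\Gamma(\gamma i/n)}.
\ee
The number of effectively contributing indices is $\approx n/(\gamma\log T)\approx n/(\gamma\log\log n)=:m$, which is precisely the count entering $a_n=\sqrt{2\log m}$; this is the rigorous form of the heuristic that the extremes mimic those of $m$ i.i.d.\ copies of $\chi(2\gamma)$. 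Writing $T=\log m+c_n+x+o(1)$, where $c_n$ is the numerator of $b_n-a_n$, I would then substitute the stated $a_n,b_n$ and check that the powers of $\log m$ and the $\Gamma(\gamma)$ factor cancel exactly, leaving $\Lambda_n(x)\to e^{-x}$.

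Finally the negligibility condition is immediate, since every term in $\Lambda_n$ is dominated by the largest one, $\p(X_{n,n}>b_n+x/a_n)\sim T^{\gamma-1}e^{-T}/\Gamma(\gamma)=O(1/m)\to0$; the two-sided interval statement follows by subtracting the estimates at the two endpoints. I expect the hard part to be the third step: making the incomplete–gamma asymptotics genuinely \emph{uniform} in the shape parameter $\gamma i/n$ and controlling the Laplace error so that the total error accumulated over the $\approx m$ dominant indices remains $o(1)$, together with the \emph{exact} matching of the logarithmic and $\Gamma(\gamma)$ constants in $b_n$ that is needed to land precisely on the intensity $e^{-x}\,\mathrm{d}x$ rather than on a (possibly divergent) multiple of it.
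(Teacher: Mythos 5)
Your plan is the paper's proof in all essentials: the two‑condition Poisson criterion you invoke is exactly Theorem \ref{Th_tool}; the chi tail is rewritten as $\Gamma(\gamma i/n,z^2/2)/\Gamma(\gamma i/n)$ and estimated by $T^{a-1}e^{-T}/\Gamma(a)$, with the uniformity in the shape $a=\gamma i/n\in(0,\gamma]$ that you rightly single out as the delicate point being supplied by the two‑sided Pad\'e bound of Lemma \ref{bounds_incomplete_lem}; and your Laplace/geometric‑profile step is implemented in the paper as an explicit cutoff at $i=n(1-\mu_n)$ with $(\log\log n)^{-1}\ll\mu_n\ll 1$, a uniform control of $2^{\gamma i/2n}\Gamma(\gamma i/n)$ over that window, and an exact geometric sum producing the same effective count $n/(\gamma\log\log n)$. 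The negligibility step is handled identically. So the method is correct and essentially the same.

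One concrete warning about your last step, where you propose to ``check that the powers of $\log m$ and the $\Gamma(\gamma)$ factor cancel exactly'': with your (correct) leading term $\Lambda_n(x)\sim n\,T^{\gamma-1}e^{-T}/\big(\gamma\,\Gamma(\gamma)\log T\big)$ --- sanity check at $\gamma=\tfrac12$, where $\chi(1)=|\mathcal{N}(0,1)|$ and $T^{-1/2}e^{-T}/\Gamma(\tfrac12)=\sqrt{2/\pi}\,e^{-z^2/2}/z$ --- solving $\Lambda_n(x)\to e^{-x}$ forces the numerator of $b_n-a_n$ to be $(\gamma-1)\log\log(\cdot)-\log\Gamma(\gamma)$, not the stated $\big(\tfrac{\gamma}{2}-1\big)\log\log(\cdot)-\log\big(2^{-\gamma/2}\Gamma(\gamma)\big)$. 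The paper arrives at the exponent $\tfrac{\gamma}{2}-1$ because, in the display following (\ref{aux_equality2}), the bound $e^{-z^2/2}(z^2/2)^{\gamma i/n-1}/\Gamma(\gamma i/n)$ from Lemma \ref{bounds_incomplete_lem} is transcribed as $e^{-z^2/2}z^{\gamma i/n-2}/\big(2^{\gamma i/2n-1}\Gamma(\gamma i/n)\big)=e^{-z^2/2}(z^2/2)^{\gamma i/2n-1}/\Gamma(\gamma i/n)$, i.e.\ the shape parameter is effectively halved. So if you carry out your (correct) computation against the theorem's $b_n$ you will be left with a residual factor of order $(\log n)^{\gamma/2}$ rather than exact cancellation; the method is sound, but the centering constant in the statement needs to be adjusted to match it.
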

\begin{rmk}
On one hand, we consider a size sample of $(X_{i,n})$ growing in $n$ and on the other hand, the greatest chi parameter of $(X_{i,n})$ is $n\beta$. Assuming that $n\beta\ll 1$, one could expect that the maximum of the $(X_{i,n})$ should converge to $0$. Our result shows the other way. In the regime $\ds{n^{-2}\ll \beta\ll n^{-1}}$, the centering $(b_n)$ is still diverging to infinity. Though, one can retrain our work to show it is converging to $0$ when $\ds{\beta\ll n^{-2}}$. We did not consider this special case here since the competition against the maxium of Gaussian variables becomes irrelevant.
\end{rmk}
Concerning the edge statistics of $H_{n,\beta}$, we derive the following corollary:
\begin{cor} Let $\lam_{\max}\left( H\right) $ the largest eigenvalue of $\ds{H:=\sqrt{\beta}H_{n,\beta}}$ defined in (\ref{H}). Assume the hypothesis of Theorem \ref{Main_th} or \ref{Main_th2}. There exists constants $0<c<c'<+\infty$ such that with probability tending to $1$, as $n\to +\infty$, 
	$$\frac{\lam_{\max}\left(H \right)}{\sqrt{\log(n)}} \in [c,c']. $$
\end{cor}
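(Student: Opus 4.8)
The plan is to first observe that $H=\sqrt{\beta}\,H_{n,\beta}$ is nothing but the bare tridiagonal matrix displayed in (\ref{H}): the factor $1/\sqrt{\beta}$ cancels, leaving the real symmetric matrix with i.i.d. diagonal entries $H_{ii}\sim\mathcal{N}(0,2)$ and nonnegative off-diagonal entries drawn from the array $(X_{i,n})$, $X_{i,n}\sim\chi(i\beta)$. The statement then amounts to sandwiching $\lam_{\max}(H)$ between two multiples of $\sqrt{\log n}$, and I would obtain the two bounds from entirely different mechanisms: the lower bound from the Gaussian diagonal, the upper bound from a Gershgorin estimate fed by \theo{Main_th} and \theo{Main_th2}.

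For the lower bound I would use the Rayleigh quotient $\lam_{\max}(H)=\max_{\|v\|=1}v^{\trans}Hv\ge e_i^{\trans}He_i=H_{ii}$ for every $i$, hence $\lam_{\max}(H)\ge\max_{1\le i\le n}H_{ii}$. The diagonal consists of $n$ i.i.d. $\mathcal{N}(0,2)$ variables, so classical Gaussian extreme value theory yields $\max_i H_{ii}/\sqrt{\log n}\to 2$ in probability, and in particular $\p(\lam_{\max}(H)\ge c\sqrt{\log n})\to 1$ for any fixed $c<2$. This bound is insensitive to the temperature regime, so it holds verbatim under the hypotheses of either theorem; this is precisely what makes the lower constant $c$ uniform across all cases.

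For the upper bound I would invoke Gershgorin's theorem. As $H$ is real symmetric and tridiagonal with nonnegative off-diagonal entries, every eigenvalue lies in some interval $[H_{ii}-R_i,\,H_{ii}+R_i]$, where each row radius $R_i$ is a sum of at most two off-diagonal entries and hence $R_i\le 2\max_{1\le j\le n}X_{j,n}$. Therefore $\lam_{\max}(H)\le\max_i H_{ii}+2\max_j X_{j,n}$. The first term is $(2+o_{\p}(1))\sqrt{\log n}$ as above. For the second, the Poisson convergence in \theo{Main_th} / \theo{Main_th2} forces $a_n(\max_j X_{j,n}-b_n)$ to converge in law to the maximal atom of the limiting process (a Gumbel variable), hence to be tight, so $\max_j X_{j,n}=b_n+O_{\p}(1/a_n)$. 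Since $b_n\le(1+o(1))\sqrt{2\log n}$ in each of the three regimes and $a_n\to\infty$, this gives $\max_j X_{j,n}=O_{\p}(\sqrt{\log n})$, and therefore $\lam_{\max}(H)\le c'\sqrt{\log n}$ with probability tending to $1$ for any fixed $c'>2+2\sqrt{2}$.

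The main, and essentially only, obstacle is converting the weak point-process convergence into a genuine upper tail bound on $\max_j X_{j,n}$: the Gershgorin inequality itself is elementary, but one must read off the distributional limit of the rightmost point, namely $\p(\max_j X_{j,n}>b_n+t/a_n)\to 1-\exp(-\mu((t,\infty)))$ at continuity points $t$, and then let $t\to\infty$ using tightness. Once $\max_j X_{j,n}$ is pinned to $b_n=O(\sqrt{\log n})$, intersecting the two high-probability events from the two preceding paragraphs delivers $\lam_{\max}(H)/\sqrt{\log n}\in[c,c']$ with probability tending to $1$, which completes the argument.
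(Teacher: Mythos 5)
Your proposal is correct and follows essentially the same route as the paper: the Rayleigh quotient applied to standard basis vectors gives $\lam_{\max}(H)\ge\max_i H(i,i)$, Gershgorin gives $\lam_{\max}(H)\le\max_i H(i,i)+2\max_i H(i,i+1)$, and both maxima are pinned to constant multiples of $\sqrt{\log n}$ using the Gaussian extreme value result for the diagonal and Theorems \ref{Main_th}/\ref{Main_th2} for the off-diagonal chi array. You supply more detail than the paper on converting the Poisson point-process convergence into tightness of $a_n(\max_j X_{j,n}-b_n)$, which is a worthwhile addition rather than a deviation.
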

\begin{proof} Consider the standard basis $e_i:=(\delta_{i,j})$ of $\R^n$. The characterization of the largest eigenvalue with the Rayleigh quotient (see Theorem 4.2.2 in \cite{Horn}) allows us to write:
\begin{align}\label{corollary_lowerbound}
\lam_{\max}\left( H \right) = \max_{x\neq 0} \frac{x^t Hx}{x^t x}\geq \frac{e_i^t He_i}{e_i^t e_i} =H\left( {i,i}\right), \quad 1\leq i\leq n .
\end{align}		
		Thus we get the lower bound $\ds{\lam_{\max}\left( H\right)\geq \max_{1\leq i \leq n}H\left(i,i \right) }$.
		
		 On the other hand, we apply the Gerschgorin circles (see Theorem 6.1.1 in \cite{Horn}) to get an upperbound: 
		\begin{align}\label{corollary_upperbound}
 \lam_{\max}\left( H\right)\leq \max_{1\leq i \leq n}H\left(i,i \right) + 2\max_{1\leq i \leq n-1}H\left(i,i+1 \right).
		\end{align}
		The claim yields by combining both bounds (\ref{corollary_lowerbound}), (\ref{corollary_upperbound}) with the conclusions of Theorem \ref{Main_th} or \ref{Main_th2}, and the counterpart result for maximum of i.i.d. Gaussian variables, that is the point process $\ds{\sum_{i=1}^{n-1}\delta_{a_n\left(H\left(i,i+1 \right)-b_n\right)}}$ converges weakly in probability to a Poisson point process, where the normalizing sequences are: $$b_n = 2\sqrt{\log(n)}-\frac{\log\log(n)+\log(4\pi)}{2\sqrt{\log(n)}},\quad a_n=2\sqrt{\log(n)}.$$
\end{proof}
In Section \ref{section_Multivariate}, we establish a general tool for asymptotic Poissonian statistics of extreme point processes in the independent but non identically distributed context. Theorem \ref{Th_tool} will be the framework of our method and is the analog of the classic Poisson limit theorem for i.i.d. extremes (see for instance Proposition 3.21 in \cite{Resnick}). Sections \ref{section_proof1} and \ref{section_proof2} are devoted to the proof of each case following the steps accordingly to Theorem \ref{Th_tool}, namely we prove appropriate convergence and uniform negligibility.
\section{Multivariate Poisson limit}\label{section_Multivariate}
For completness, we recall the definition of a Poisson point process. 
\begin{Def}\label{def_ppp}
	Let $X$ a topological space endowed with a $\sigma$-finite measure $\mu$. A Poisson point process on $X$ with intensity $\mu$ is a random Radon measure $N$ on $X$ such that for any $k\geq 1$ and $B_1,\ldots,B_k$ pairwise disjoint Borel sets of $X$ with finite $\mu$ measure, the random vector $\left( N\left( B_1\right) ,\ldots, N\left( B_k\right) \right) $ has law $\ds{\op{Po}\left( \mu(B_1\right)\otimes \cdots \otimes \op{Po}\left( \mu(B_k\right) }$.
\end{Def}
In order to reach the point process level, we need the following finite-dimensional Poisson approximation:
\begin{Prop}\label{multivariate_poisson_limit_prop}
	Let $(X_{i,n})$ a triangular array of independent random variables with same interval support $\mathcal{S}$. Let $I\subset \R$ an interval and $\ds{\vfi_n:I\to \mathcal{S}}$ an increasing homeomorphism. Let $k\geq 1$ and $ x_0<\ldots <x_k\in I$ and the random vector $(N_0,\ld, N_k)$ defined for $ \ell \leq k$ by: $$N_\ell=\#\{ i=1, \ld, n\ste X_{i,n}\in (\vfi_n(x_{\ell-1}),\vfi_n(x_{\ell})]\}= \sum_{i=1}^n \delta_{\{ X_{i,n}\in (\vfi_n(x_{\ell-1}),\vfi_n(x_{\ell})] \}}.$$ 
	For $1\leq \ell \leq k<+\infty$, we suppose that: \begin{align}\label{condition_for_poisson1}
	\sum_{i=1}^n\p\left( X_{i,n}\in (\vfi_n(x_{\ell-1}),\vfi_n(x_{\ell})]\right) \xrightarrow[n\infty]{} \lam_\ell,
	\end{align}
	\begin{align}\label{condition_for_poisson2}
	\max_{1\leq \ell \leq k} \max_{i\leq n}\left( \p\left(X_{i,n}\in (\vfi_n(x_{\ell-1}),\vfi_n(x_{\ell})] \right) \right) \xrightarrow[n\infty]{}0.
	\end{align}
	
	Then we have the convergence in distribution: $$(N_1, \ld N_k)\ninf \op{Po}(\lam_1)\otimes\cd\otimes \op{Po}(\lam_k).$$ 
\end{Prop}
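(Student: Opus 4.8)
The plan is to pass through the joint probability generating function (PGF) of the count vector $(N_1,\ldots,N_k)$, since each $N_\ell$ is $\mathbb{Z}_{\geq 0}$-valued and the target law is a product of Poissons. Abbreviate $I_\ell:=(\vfi_n(x_{\ell-1}),\vfi_n(x_\ell)]$ and $p_{i,\ell}:=\p\big(X_{i,n}\in I_\ell\big)$. Because $\vfi_n$ is an increasing homeomorphism the $I_\ell$ are pairwise disjoint, so for each fixed $i$ the events $\{X_{i,n}\in I_\ell\}$, $\ell=1,\ldots,k$, are mutually exclusive, and $X_{i,n}$ contributes to at most one coordinate of the vector.

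First I would compute, using independence across $i$ and mutual exclusivity for fixed $i$, the factorization
$$\E\Big[\prod_{\ell=1}^k t_\ell^{N_\ell}\Big]=\prod_{i=1}^n\E\Big[\prod_{\ell=1}^k t_\ell^{\one\{X_{i,n}\in I_\ell\}}\Big]=\prod_{i=1}^n\Big(1+\sum_{\ell=1}^k (t_\ell-1)\,p_{i,\ell}\Big),\qquad (t_1,\ldots,t_k)\in[0,1]^k.$$
The target is the PGF $\exp\big(\sum_{\ell}\lambda_\ell(t_\ell-1)\big)$ of $\op{Po}(\lambda_1)\otimes\cdots\otimes\op{Po}(\lambda_k)$, so it suffices to show that the logarithm of the product above converges to $\sum_\ell \lambda_\ell(t_\ell-1)$ for each such $(t_\ell)$.

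Set $u_{i}:=\sum_{\ell}(t_\ell-1)p_{i,\ell}$. Since $|t_\ell-1|\le 1$ on $[0,1]$, condition \eqref{condition_for_poisson2} gives $\max_i|u_i|\le \sum_\ell p_{i,\ell}\le k\max_{i,\ell}p_{i,\ell}\to 0$, so for $n$ large every $|u_i|$ lies in the range where $|\log(1+u)-u|\le u^2$. Hence
$$\Big|\sum_{i=1}^n\log(1+u_i)-\sum_{i=1}^n u_i\Big|\le \sum_{i=1}^n u_i^2 .$$
The main term equals $\sum_\ell(t_\ell-1)\sum_i p_{i,\ell}\to\sum_\ell\lambda_\ell(t_\ell-1)$ by \eqref{condition_for_poisson1}. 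For the remainder, with $C:=(\max_\ell|t_\ell-1|)^2$ one has $u_i^2\le C\big(\sum_\ell p_{i,\ell}\big)^2\le C\,k\,(\max_{i,\ell}p_{i,\ell})\sum_\ell p_{i,\ell}$, so that
$$\sum_{i=1}^n u_i^2\le C\,k\,\Big(\max_{i,\ell}p_{i,\ell}\Big)\sum_{\ell=1}^k\sum_{i=1}^n p_{i,\ell}\xrightarrow[n\infty]{}0,$$
because the double sum stays bounded by $\sum_\ell\lambda_\ell$ via \eqref{condition_for_poisson1} while the prefactor vanishes via \eqref{condition_for_poisson2}.

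Combining the two displays, the joint PGF converges pointwise on $[0,1]^k$ to $\exp\big(\sum_\ell\lambda_\ell(t_\ell-1)\big)$, which is the PGF of a genuine probability law on $\mathbb{Z}_{\geq 0}^k$; the continuity theorem for generating functions of integer-valued random vectors then yields the claimed convergence in distribution. The main obstacle—and the only place both hypotheses are genuinely used together—is the control of the second-order remainder $\sum_i u_i^2$: it is precisely the uniform negligibility \eqref{condition_for_poisson2} that annihilates the quadratic contributions once the total mass is pinned down by \eqref{condition_for_poisson1}, which is exactly the triangular-array analogue of the classical i.i.d. Poisson limit argument.
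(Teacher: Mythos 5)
Your proof is correct and follows essentially the same route as the paper's: the paper computes the Laplace transform $\E\big(e^{-\langle t,N\rangle}\big)$ rather than the generating function, but after the substitution $t_\ell\mapsto e^{-t_\ell}$ the factorization over $i$, the logarithmic expansion, and the control of the quadratic remainder by $\big(\max_{i,\ell}p_{i,\ell}\big)\sum_{i,\ell}p_{i,\ell}$ via \eqref{condition_for_poisson1} and \eqref{condition_for_poisson2} are identical. Your handling of the sign of the increments in the bound $|\log(1+u)-u|\le u^2$ is in fact slightly cleaner than the paper's Taylor--Lagrange step.
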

\begin{proof}
	For convenience, we set $\ds{p^{(n)}_{i,\ell}:=\p\left( X_{i,n}\in (\vfi_n(x_{\ell-1}),\vfi_n(x_{\ell})] \right) }$. 
	
	Let $t=(t_1,..,t_k) \in \R^k$, $N = (N_1,..,N_k)$. We compute the Laplace transform of $N$: \begin{align*} \E \big( e^{-<t,N>} \big) &=\E  \big( e^{-\sum_{\ell=1}^k \sum_{i=1}^n t_\ell 1_{\{  X_{i,n}\in (\vfi_n(x_{\ell-1}),\vfi_n(x_{\ell})]\} }} \big) \\ &=  \prod_{i=1}^n \E \big( e^{-\sum_{\ell=1}^k t_\ell 1_{\{ X_{i,n}\in (\vfi_n(x_{\ell-1}),\vfi_n(x_{\ell})]\} }} \big) \qquad \text{ by independence} \\ &= \prod_{i=1}^n \Big( \sum_{\ell=0}^k p^{(n)}_{\ell,i} e^{-t_{\ell}}\Big) \qquad \text{ by transfer and with $t_0 = 0$} \\ &=\prod_{i=1}^n \Big(1+ \sum_{\ell=1}^k p^{(n)}_{\ell,i}( e^{-t_{\ell}}-1)\Big) \\  &= \prod_{i=1}^n \Big(1+ a_{i}^{(n)} \Big) \qquad \text{ with $a_{i}^{(n)} := \sum_{\ell=1}^k p^{(n)}_{\ell,i}( e^{-t_{\ell}}-1)$}.  \end{align*}
	Taking the logarithm and Taylor-expanding, \begin{align*} \log  \E \big( e^{-<t,N>} \big)  &= \sum_{i=1}^n \log \Big(1+ \sum_{\ell=1}^k p_{\ell,i}^{(n)}( e^{-t_{\ell}}-1)\Big) \\ &=  \sum_{i=1}^n \log \Big(1+ a_{i}^{(n)}\Big) \\ &=\sum_{i=1}^n \Big( a_{i}^{(n)} - \f{(a_{i}^{(n)})^2}{2(1+c)^2} \Big) \qquad \text{ with $0<c=c_{i,n}<a_{i}^{(n)}$} \\ &= \sum_{i=1}^n  a_{i}^{(n)} - \sum_{i=1}^n \f{(a_{i}^{(n)})^2}{2(1+c)^2}  \end{align*} From hypothesis (\ref{condition_for_poisson1}) and (\ref{condition_for_poisson2}), we deduce: $$\ds{\sum_{i=1}^n a_{i}^{(n)} \xrightarrow[n\infty]{} \sum_{\ell=1}^k \lam_\ell (e^{-t_\ell}-1)},$$ $$\ds{0 \leq \sum_{i=1}^n\f{(a_{i}^{(n)})^2}{2(1+c)^2} \leq \sum_{i=1}^n (a_{i}^{(n)})^2 \leq \big( \max_{i \leq n} a_{i}^{(n)} \big)\Big( \sum_{i=1}^n a_{i}^{(n)} \Big) \xrightarrow[n\infty]{} 0. }$$ It follows that: \begin{align*} \log  \E \big( e^{-<t,N>} \big)  &=  \sum_{i=1}^n  a_{i}^{(n)} + o(1) \xrightarrow[n\infty]{} \sum_{\ell=1}^k \lam_\ell (e^{-t_\ell}-1).  \end{align*}
\end{proof}
We are now ready to show the goal of this section:
\begin{Th}\label{Th_tool}
	Let $(X_{i,n})$ a triangular array of independent random variables with same interval support $\mathcal{S}$. Assume there exists $I\subset \R$ interval, $\ds{G:I\to \R^+}$ c\`{a}dl\`{a}g with finite limit at the right boundary of $I$ and $\ds{\vfi_n:I\to \mathcal{S}}$ increasing homeomorphism such that: \begin{align}\label{condition_for_poisson3}
	\forall x\in I,\qquad \sum_{i=1}^n \p\left( X_{i,n}\geq \vfi_n\left( x\right) \right) \xrightarrow[n\infty]{} G(x),\end{align} and for $x,y\in I$ such that $y<x<+\infty$, \begin{align}\label{condition_for_poisson4}
	\max_{i \leq n} \p\left(  \vfi_n\left(y\right)  < X_{i,n} \leq \vfi_n\left( x\right)   \right) \xrightarrow[n\infty]{} 0.
	\end{align} Then the point process $$\sum_{i=1}^n \del_{\vfi^{-1}_n(X_{i,n})}$$ converges to a Poisson point process on the interval $I$ with intensity $\mu$ defined by $$\ds{\mu\left((x,y]  \right):=G(x)-G(y), \qquad x<y .}$$
\end{Th}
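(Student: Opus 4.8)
The plan is to upgrade the finite-dimensional Poisson approximation of \prop{multivariate_poisson_limit_prop} to convergence at the level of point processes, by means of the classical criterion (Kallenberg's theorem, see \cite{Resnick}) which characterises a limiting Poisson point process through its one-dimensional behaviour on a generating semiring of relatively compact sets. The natural semiring here is the collection of half-open intervals $(x,y]\subset I$, which is dissecting, generates the Borel $\sigma$-algebra of $I$, and consists of relatively compact sets even when $I$ is unbounded.

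First I would check that the prescription $\mu((x,y]):=G(x)-G(y)$ really defines a Radon measure. For $x<y$ in $I$ the increasingness of $\vfi_n$ gives $\vfi_n(x)<\vfi_n(y)$, whence $\p(X_{i,n}\geq \vfi_n(x))\geq \p(X_{i,n}\geq \vfi_n(y))$; letting $n\to\infty$ in \eqref{condition_for_poisson3} shows $G$ is non-increasing. Being also c\`{a}dl\`{a}g with a finite limit at the right boundary of $I$, the function $-G$ is non-decreasing and right-continuous, so the Lebesgue--Stieltjes construction yields a locally finite measure $\mu$ on $I$ with $\mu((x,y])=G(x)-G(y)\geq 0$, and the target process $N$ is well defined.

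Next, fix $x_0<\cdots<x_k$ in $I$ and put $B_\ell:=(x_{\ell-1},x_\ell]$. Since $\vfi_n$ is an increasing homeomorphism onto $\mathcal{S}$, its inverse is increasing and $$\vfi_n^{-1}(X_{i,n})\in B_\ell \iff X_{i,n}\in(\vfi_n(x_{\ell-1}),\vfi_n(x_\ell)],$$ so evaluating $N_n:=\sum_{i=1}^n\delta_{\vfi_n^{-1}(X_{i,n})}$ on $B_1,\ldots,B_k$ returns exactly the counts $N_1,\ldots,N_k$ of \prop{multivariate_poisson_limit_prop}. It remains to verify the two hypotheses of that proposition. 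Writing each interval probability as a difference of tails, and using that the $X_{i,n}$ have no atoms (their laws being continuous, as for the chi variables of this paper, so $\geq$ and $>$ coincide), assumption \eqref{condition_for_poisson3} gives $$\sum_{i=1}^n \p\big(X_{i,n}\in(\vfi_n(x_{\ell-1}),\vfi_n(x_\ell)]\big)=\sum_{i=1}^n\p(X_{i,n}\geq \vfi_n(x_{\ell-1}))-\sum_{i=1}^n\p(X_{i,n}\geq \vfi_n(x_\ell))\xrightarrow[n\infty]{} G(x_{\ell-1})-G(x_\ell)=\mu(B_\ell),$$ which is \eqref{condition_for_poisson1} with $\lam_\ell:=\mu(B_\ell)$; and \eqref{condition_for_poisson2} follows at once from \eqref{condition_for_poisson4}, being a maximum over the finitely many intervals $B_\ell$. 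Thus \prop{multivariate_poisson_limit_prop} yields $(N_n(B_1),\ldots,N_n(B_k))\ninf \op{Po}(\lam_1)\otimes\cdots\otimes\op{Po}(\lam_k)$, and the $\lam_\ell=\mu(B_\ell)$ are exactly the intensities assigned by $N$.

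Finally, these finite-dimensional limits determine, for every $B$ in the semiring (and its finite disjoint unions), both $\E[N_n(B)]\to\mu(B)$ and $\p(N_n(B)=0)\to e^{-\mu(B)}$; Kallenberg's theorem then delivers the weak convergence $N_n\Rightarrow N$ to the Poisson point process of intensity $\mu$. I expect the only genuinely delicate step to be this last lifting: one must confirm that the half-open intervals form an admissible dissecting generating semiring of $\mu$-continuity sets and that $\mu$ is diffuse (equivalently $G$ continuous, as holds in all the applications of Theorems \ref{Main_th} and \ref{Main_th2}), so that the limit point process is simple and Kallenberg's criterion applies verbatim; the verification of the two numerical conditions above, by contrast, is routine once the tail telescoping is in place.
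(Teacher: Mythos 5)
Your proposal is correct and follows essentially the same route as the paper: both reduce the statement to Proposition \ref{multivariate_poisson_limit_prop} by identifying $N_n(x_{\ell-1},x_\ell]$ with the count of $X_{i,n}$ in $(\vfi_n(x_{\ell-1}),\vfi_n(x_\ell)]$, obtaining condition (\ref{condition_for_poisson1}) by telescoping the tails from (\ref{condition_for_poisson3}) and condition (\ref{condition_for_poisson2}) directly from (\ref{condition_for_poisson4}). The only difference is one of explicitness: you spell out the final lifting from finite-dimensional convergence to weak convergence of point processes (Kallenberg's criterion, the Radon property of $\mu$ via the Lebesgue--Stieltjes construction, and the atom/continuity caveat in passing from $\geq$ to $>$), all of which the paper leaves implicit by appealing directly to Definition \ref{def_ppp}.
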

\begin{proof}
	Let $\ds{N_n:=\sum_{i=1}^n \del_{\vfi^{-1}_n(X_{i,n})}}$. By Definition \ref{def_ppp}, we need to show the following convergence in distribution, for any $k\geq 1$ and $x_0<\ldots<x_k\in I$, $$\left( N_n (x_0,x_1] , \ld N_n(x_{k-1},x_k]\right) \xrightarrow[n\infty]{} \op{Po}(\mu(x_0,x_1])\otimes\cd\otimes \op{Po}(\mu(x_{k-1},x_k])$$
	where for $\lam\ge 0$, $$\op{Po}(\lam):=e^{-\lam}\sum_{i\ge 0}\f{\lam^i}{i!}\del_i.$$
	First thing to notice is the following identity for $1\leq \ell\leq  k$, $$\ds{N_n (x_{\ell-1},x_\ell]=\#\{ i=1, \ld, n\ste X_{i,n}\in (\vfi_n(x_{\ell-1}),\vfi_n(x_{\ell})]\}.}$$ Thus, we aim to apply Proposition \ref{multivariate_poisson_limit_prop} in order to complete the proof. Clearly, the condition (\ref{condition_for_poisson2}) is fulfilled thanks to the uniform negligibility hypothesis (\ref{condition_for_poisson4}). Besides, for any $x\in I$, one has $\ds{\mu\left( \left( x,+\infty\right) \cap I \right) <+\infty}$ because of the hypothesis on $G$. Hence, using (\ref{condition_for_poisson3}), we get: $$\forall x<y\in I,\qquad \sum_{i=1}^n \p\left(\vfi_n(x)< X_{i,n}\leq \vfi_n\left( y\right) \right) \xrightarrow[n\infty]{} G(x) -G(y).$$ The assumption (\ref{condition_for_poisson1}) readily follows with $\ds{\lam_\ell = \mu\left((x_{\ell-1},x_\ell]  \right)=G(x_{\ell-1})-G(x_\ell)}$.
\end{proof}
\section{Proof of Theorem \ref{Main_th}}\label{section_proof1}
Before going in the details of the demonstration, let us fix our notation and recall some well-known facts about Gamma function and Gamma distribution.
\begin{rmk}\label{gamma_chi_relation_rmk}
	A real random variable $X$ follows a $\Gamma(\al,\beta)$ distribution, ie: $X\sim \Gamma(\al,\beta)$, if its density is $\ds{f_X(x)= \frac{\beta^\al x^{\al-1}e^{-\beta x}}{\Gamma(\al)}}$, $\ds{x\in (0,+\infty)}$. Also, $$ \Gamma(a,z):= \p\left(\Gamma\left( a,1\right)  \geq z\right) =\int_z^{+\infty}\frac{ x^{\al-1}e^{- x}}{\Gamma(\al)}\mathrm{d}x .$$
	For $\al,\beta,\lam > 0$ and $k>0$, the following equalities holds in distribution: \begin{align}\label{gamma_chi_relation}
\chi(k)= \sqrt{\Gamma(\frac{k}{2},\frac{1}{2})},\qquad  \lam \Gamma(\al,\beta)= \Gamma(\al,\frac{\beta}{\lam} ).
	\end{align}
\end{rmk}
The main step in our proof is the fulfillment of condition (\ref{condition_for_poisson3}) so we need a way to control summation of incomplete Gamma functions. Indeed, previous Remark \ref{gamma_chi_relation_rmk} indicates how chi's survival functions are related to incomplete Gamma functions. As we have to take limit on a sum, we require to work on a finite setting ($n<+\infty$) and estimate each summand by the crucial bound:
\begin{lem}\label{bounds_incomplete_lem}
	For $a<1$ and $z>0$, \begin{align}\label{bounds_incomplete_eq}
	\frac{z}{z+1-a}\frac{e^{-z}z^{a-1}}{\Gamma(a)} < \frac{\Gamma(a,z)}{\Gamma(a)}\leq \frac{e^{-z}z^{a-1}}{\Gamma(a)}.
	\end{align}
\end{lem}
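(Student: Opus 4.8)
The plan is to clear the strictly positive factor $1/\Gamma(a)$ (since $0<a<1$ we have $\Gamma(a)>0$) and reduce the claim to the two-sided estimate
\[
\frac{z^{a}e^{-z}}{z+1-a} \;<\; \int_z^{+\infty} x^{a-1}e^{-x}\,\mathrm{d}x \;\le\; z^{a-1}e^{-z},
\]
where the middle quantity is the (unnormalized) upper incomplete Gamma function $\Gamma(a,z)$, so that $\Gamma(a,z)/\Gamma(a)$ is the survival function appearing in the statement. I would handle the two inequalities separately, since they rest on different ideas.

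For the upper bound the quickest route is integration by parts: taking $u=x^{a-1}$ and $\mathrm{d}v=e^{-x}\,\mathrm{d}x$ yields the exact identity
\[
\Gamma(a,z) \;=\; z^{a-1}e^{-z} \;-\; (1-a)\int_z^{+\infty} x^{a-2}e^{-x}\,\mathrm{d}x .
\]
As $a<1$ the subtracted term is strictly positive, so $\Gamma(a,z)\le z^{a-1}e^{-z}$ is immediate. (Equivalently, since $a-1<0$ the integrand obeys $x^{a-1}\le z^{a-1}$ on $[z,+\infty)$, and integrating $e^{-x}$ gives the same bound.)

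The lower bound is the real obstacle, because the naive estimate from the identity above is too weak: bounding the leftover integral by $z^{a-2}e^{-z}$ only gives $\Gamma(a,z)\ge z^{a-2}e^{-z}(z+a-1)$, which is smaller than the target and can even be negative when $z<1-a$. Instead I would argue by monotonicity. Set
\[
h(z) \;:=\; \Gamma(a,z) \;-\; \frac{z^{a}e^{-z}}{z+1-a},
\]
which is smooth on $(0,+\infty)$ since $z+1-a>1-a>0$. Differentiating with $\frac{\mathrm{d}}{\mathrm{d}z}\Gamma(a,z)=-z^{a-1}e^{-z}$ and simplifying the rational derivative of the second term, the whole expression collapses to
\[
h'(z) \;=\; -\,\frac{(1-a)\,z^{a-1}e^{-z}}{(z+1-a)^{2}} \;<\; 0,
\]
so $h$ is strictly decreasing. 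Since both $\Gamma(a,z)\to 0$ and $z^{a}e^{-z}/(z+1-a)\to 0$ as $z\to+\infty$, we have $h(z)\to 0$ at the right end, and a strictly decreasing function tending to $0$ is strictly positive; hence $h(z)>0$ for all $z>0$, which is exactly the lower bound. The one delicate point is the algebra in computing $h'$: the cancellation of all terms in the numerator down to the single factor $1-a$ is precisely what makes the comparison close, and checking that simplification is where the actual content of the proof lies.
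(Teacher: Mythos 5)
Your proof is correct, and it is genuinely more self-contained than what the paper does: the paper does not prove this lemma at all, but simply cites the literature (the upper bound as ``classic'' from Borwein--Chan, and the lower bound via Pad\'e approximants with a pointer to the NIST handbook). Your upper bound via $x^{a-1}\le z^{a-1}$ on $[z,+\infty)$ (or equivalently the integration-by-parts identity) is the standard one-line argument behind the cited bound. Your lower bound is where you add real value: the monotonicity argument for $h(z)=\Gamma(a,z)-z^{a}e^{-z}/(z+1-a)$ is exactly the elementary verification that underlies the first Pad\'e/continued-fraction approximant the paper invokes, and the algebra checks out --- with $f(z)=z^{a}e^{-z}$ and $g(z)=z+1-a$ one finds that the numerator of $(f/g)'$ plus $(z+1-a)^{2}$ collapses to $1-a$, giving
\begin{align*}
h'(z)=-\frac{(1-a)\,z^{a-1}e^{-z}}{(z+1-a)^{2}}<0,
\end{align*}
and since $h(z)\to 0$ as $z\to+\infty$ this forces $h>0$ on $(0,+\infty)$. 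Two minor remarks: you correctly restrict to $0<a<1$ so that $\Gamma(a)>0$ and dividing by it preserves the inequalities (the lemma's hypothesis ``$a<1$'' is slightly loose, since only $a=i\beta/2\in(0,1)$ is ever used); and your observation that the naive bound $\Gamma(a,z)\ge z^{a-2}e^{-z}(z+a-1)$ obtained from the integration-by-parts identity is strictly weaker (indeed $z-(1-a)\le z^{2}/(z+1-a)$) correctly explains why the monotonicity detour is needed. What your approach buys is a fully verifiable, reference-free proof; what the paper's citation buys is brevity and a pointer to sharper higher-order approximants that are not needed here.
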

\begin{proof}
	The upper bound is classic and can be found in \cite{Borwein_Chan}. One can use Pad\'e approximants to prove the lower bound of (\ref{bounds_incomplete_eq}). As a reference for this, we give \cite{nist}.
\end{proof}
We state two well-known facts which are going to be helpful to accuratly evaluate the whole sum.
\begin{rmk}
	It is well-known that the Gamma function admits the Taylor expansion near $u=0$: $$\Gamma(u)=\ff{u}+\gamma +\frac{6\gamma^2+\pi^2}{12} u+O(u^2).$$
	This combines well with the hypothesis $n\beta \ll 1$ since it implies $\ds{i\beta\xrightarrow[n\infty]{}0}$ uniformly in $i\leq n$, ie: $\ds{\max_{i\leq n}\left(i\beta \right)\xrightarrow[n\infty]{}0 }$. Thus,
	\begin{align}\label{Gamma_near_0}
	\Gamma\left( \frac{i\beta}{2} \right) &= \frac{2}{i\beta}+\gamma_{\text{Euler}} + O\left(n\beta\right).
	\end{align}
	For $u \in \R$, let us give another useful identity: \begin{align}\label{sum}
	\sum_{k=1}^{n}ku^k &= \frac{(nu-n-1)u^{n+1}+u}{(1-u)^2}.
	\end{align}
\end{rmk}
Lastly, we show a technical result which estimates an auxiliary term in the upcoming computations. Its outcome varies depending to $\beta$ regime and accurately shows the differences between the two regimes.
\begin{lem}\label{lambda_estimate}
	Let $z:=z_n\gg 1$ such that $\ds{z\ll \log^\delta(n)} $ for some $\delta>0$ and $\ds{\beta \ll \ff{n}}$. 
	
	We define: 	\begin{align*}
	\Lambda_n(z) &:= n\left(\left( \frac{z^2}{2}\right)^\frac{\beta}{2}  -1\right)  \left( \frac{z^2}{2}\right)^{\frac{n\beta}{2}}  -\left(\frac{z^2}{2} \right)^{\frac{n\beta}{2}} +1   . 
	\end{align*} Assume $\ds{\ff{n^2}\ll \beta \ll \ff{n\log \log \left( n\right) }  }$. Then, \begin{align}\label{lambda_first_case}\Lambda_n(z)&=
	\ff{2}\left( \frac{n\beta}{2}\log \left( \frac{z^2}{2} \right) \right)^2 \left(1+o(1) \right). 
	\end{align}
	
	Assume $\ds{\ff{n\log\log \left( n\right) }\ll \beta \ll \ff{n}  }$. Then, \begin{align}\label{lambda_second_case}\Lambda_n(z)&=
	\left(\frac{z^2}{2} \right)^{\frac{n\beta}{2}} \left( \frac{n\beta}{2} \log \left( \frac{z^2}{2}\right)\right) \left( 1+o(1)\right)  . 
	\end{align}
\end{lem}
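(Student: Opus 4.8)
The plan is to collapse the definition of $\Lambda_n(z)$ into a single geometric-type sum and then read off the two asymptotic regimes from the size of the product $n\beta\log(z^2/2)$. Writing $x:=(z^2/2)^{\beta/2}$ so that $(z^2/2)^{n\beta/2}=x^n$, the definition becomes $\Lambda_n(z)=n(x-1)x^n-x^n+1=nx^{n+1}-(n+1)x^n+1$. Setting $u=x$ in identity \eqref{sum} and clearing the common factor $x$ from the numerator $(nx-n-1)x^{n+1}+x=x\,[nx^{n+1}-(n+1)x^n+1]$, I would record the exact identity $\Lambda_n(z)=(1-x)^2\sum_{k=1}^{n}kx^{k-1}$, together with the equivalent telescoped form $\Lambda_n(z)=x^n\bigl(n(x-1)-1\bigr)+1$. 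It is convenient to abbreviate $L:=\log(z^2/2)$ and $\epsilon:=x-1=\exp(\tfrac{\beta}{2}L)-1$. Since $z\gg1$ forces $L\to+\infty$ while $z\ll\log^\delta(n)$ forces $L=O(\log\log n)$, and since $\beta\ll n^{-1}$, one has $\tfrac{\beta}{2}L\to0$; hence $x\to1$ and $\epsilon=\tfrac{\beta}{2}L\,(1+o(1))$, so that $n\epsilon=\tfrac{n\beta}{2}L\,(1+o(1))$ governs everything.

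\textbf{First regime.} When $\tfrac{1}{n^2}\ll\beta\ll\tfrac{1}{n\log\log n}$, the hypothesis gives $n\beta\log\log n\to0$, and combined with $L=O(\log\log n)$ this yields $n\epsilon=\tfrac{n\beta}{2}L\,(1+o(1))\to0$. I would then work with the factored form $\Lambda_n(z)=\epsilon^2\sum_{k=1}^{n}kx^{k-1}$: since $x>1$ eventually and $(1+\epsilon)^{n}\to1$, one has uniformly over $1\le k\le n$ the bound $1\le x^{k-1}\le x^{n}=1+o(1)$, so $\sum_{k=1}^{n}kx^{k-1}=\tfrac{n(n+1)}{2}(1+o(1))\sim\tfrac{n^2}{2}$. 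Therefore $\Lambda_n(z)=\tfrac{(n\epsilon)^2}{2}(1+o(1))=\tfrac12\bigl(\tfrac{n\beta}{2}L\bigr)^2(1+o(1))$, which is \eqref{lambda_first_case}.

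\textbf{Second regime.} When $\tfrac{1}{n\log\log n}\ll\beta\ll\tfrac1n$, so that $z^2/2$ is of the order relevant to the application (where $L\asymp\log\log n$ and hence $n\beta L\to+\infty$), one has $n\epsilon\to+\infty$. Here I would use the telescoped form $\Lambda_n(z)=x^n(n\epsilon-1)+1$. Since $x^n=(z^2/2)^{n\beta/2}=\exp(n\epsilon(1+o(1)))\to+\infty$ and $n\epsilon\to+\infty$, the product $x^n\,n\epsilon$ dominates both the subtracted $x^n$ and the additive constant, giving $\Lambda_n(z)=x^n\,n\epsilon\,(1+o(1))=(z^2/2)^{n\beta/2}\,\tfrac{n\beta}{2}L\,(1+o(1))$, which is \eqref{lambda_second_case}.

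\textbf{Main obstacle.} The algebra reducing $\Lambda_n(z)$ to $\epsilon^2\sum_{k}kx^{k-1}$ is routine; the genuine content is twofold. First, one must justify that $\epsilon\sim\tfrac{\beta}{2}L$ with $x\to1$, which hinges precisely on pairing $\beta\ll n^{-1}$ against the bound $L=O(\log\log n)$ coming from $z\ll\log^\delta(n)$. Second, and more delicate, is locating the phase transition at $n\beta L\asymp1$ (equivalently $\beta\asymp(n\log\log n)^{-1}$): below it the sum behaves like the flat sum $\sum_{k\le n}k\sim n^2/2$, so the leading order is quadratic in $n\epsilon$ and the second-order term of $\epsilon$ must be retained; above it the sum is dominated by the terms $k\approx n$ through the exponential factor $x^n$, so the leading order is linear in $n\epsilon$. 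The care needed is the uniform estimate $(1+\epsilon)^{k-1}=1+o(1)$ in the first regime and the verification that $n\epsilon\to+\infty$ in the second, the latter being exactly where having $L\asymp\log\log n$ is essential.
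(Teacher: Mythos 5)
Your argument is correct and, in the first regime, takes a genuinely different route from the paper's. Writing $L:=\log(z^2/2)$, the paper proves \eqref{lambda_first_case} by Taylor-expanding each factor of $\Lambda_n(z)$ to second order, multiplying everything out, checking that the first-order terms cancel, and identifying the surviving quadratic contributions $\tfrac{n}{2}\bigl(\tfrac{\beta}{2}L\bigr)^2+\tfrac{1}{2}\bigl(\tfrac{n\beta}{2}L\bigr)^2$, of which the second dominates. You instead invert the derivation that produces $\Lambda_n$ in Proposition \ref{Main_prop}: the exact identity $\Lambda_n(z)=(1-x)^2\sum_{k=1}^{n}kx^{k-1}$ with $x=(z^2/2)^{\beta/2}$ lets you sandwich the sum between $\tfrac{n(n+1)}{2}$ and $x^{n}\tfrac{n(n+1)}{2}$ and conclude from $x^{n}\to1$; this avoids tracking the cancellation and the cubic error terms altogether and is the cleaner argument. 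In the second regime your telescoped form $\Lambda_n(z)=x^{n}\bigl(n\epsilon-1\bigr)+1$ is precisely the paper's factorization of $(z^2/2)^{n\beta/2}$ out of $\Lambda_n$, and the two arguments conclude identically. One remark: in that second regime both you and the paper need $n\epsilon\sim\tfrac{n\beta}{2}L\to+\infty$, which does not actually follow from the lemma's stated hypotheses $z\gg1$ and $z\ll\log^{\delta}(n)$ alone (these bound $L$ from above by a multiple of $\log\log n$ but provide no matching lower bound against $\beta\gg(n\log\log n)^{-1}$); you are right to flag that this is supplied by the application, where $z=\vfi_n(x)\sim\sqrt{2\log n}$ forces $L\asymp\log\log n$ --- the paper leaves this assumption implicit.
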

\begin{proof}We begin with the first case $\ds{n^{-2}\ll \beta \ll \left( n\log \log n\right)^{-1}    }$. We treat each three main terms (except $n$) by second order Taylor expansion, so that $\Lambda_n(z)$ equals to:
	\begin{align*}
	&\left(\frac{n\beta}{2} \log \left( \frac{z^2}{2}\right)+\frac{n\left(\frac{\beta}{2} \log \left( \frac{z^2}{2}\right)\right)^2 }{2}  +nO\left( \beta^3 \log^3(z) \right)  \right) \times
	\\&\qquad \times  \left(1+ \frac{n\beta}{2} \log \left( \frac{z^2}{2}\right) + \frac{\left( \frac{n\beta}{2} \log \left( \frac{z^2}{2}\right)\right)^2 }{2}
	+ O\left( ( n\beta)^3 \log^3 \left( z\right)\right) \right)  
	\\&\quad -\frac{n\beta}{2}\log \left( \frac{z^2}{2} \right) - \frac{\left( \frac{n\beta}{2}\log \left( \frac{z^2}{2} \right) \right)^2 }{2}+O\left( n^3\beta^3\log^3 \left( z \right)  \right)      .
	\end{align*}
	Expanding the product, we get:
	\begin{align*}
	\Lambda_n(z)&=\frac{n\beta}{2} \log \left( \frac{z^2}{2}\right)+ \frac{n\left(\frac{\beta}{2} \log \left( \frac{z^2}{2}\right)\right)^2 }{2}   +nO\left( \beta^3 \log^3(z) \right)
	\\&+\left( \frac{n\beta}{2} \log \left( \frac{z^2}{2}\right)\right)^2 + \frac{n^2\left(\frac{\beta}{2} \right) ^3 \log^3 \left( \frac{z^2}{2}\right)}{2}+n^2 \beta \log(z)O\left( \beta^3 \log^3(z) \right) 
	\\&-\frac{n\beta}{2}\log \left( \frac{z^2}{2} \right) - \frac{\left( \frac{n\beta}{2}\log \left( \frac{z^2}{2} \right) \right)^2 }{2}+O\left( n^3\beta^3\log^3 \left( z   \right)\right) .
	\end{align*}
	The first order terms vanishes. Cleaning the highest order quantities, it leads to:
	$$\frac{n\left(\frac{\beta}{2} \log \left( \frac{z^2}{2}\right)\right)^2 }{2}   
	+ \frac{\left( \frac{n\beta}{2}\log \left( \frac{z^2}{2} \right) \right)^2 }{2}+O\left( n^3\beta^3\log^3 \left(z  \right)  \right)  .
	$$
	Thus, the result readily follows from the assumptions made on $z$ and $\beta$.
	
	Now, we assume $\ds{\ff{n\log\log \left( n\right) }\ll \beta \ll \ff{n}  }$. Likewise, 
	\begin{align*}
	\Lambda_n(z) &:= n\left(\left( \frac{z^2}{2}\right)^\frac{\beta}{2}  -1\right)  \left( \frac{z^2}{2}\right)^{\frac{n\beta}{2}}  -\left(\frac{z^2}{2} \right)^{\frac{n\beta}{2}} +1
	\\&= \left(\frac{z^2}{2} \right)^{\frac{n\beta}{2}} \left( n\left(\left(\frac{z^2}{2} \right)^{\frac{\beta}{2}} -1\right)   -1+\ff{\left(\frac{z^2}{2} \right)^{\frac{n\beta}{2}}}\right) 
	\\&= \left(\frac{z^2}{2} \right)^{\frac{n\beta}{2}} \left( n\left(\frac{\beta}{2} \log \left( \frac{z^2}{2}\right) +O\left(\beta^2 \log^2(z) \right)  \right)   -1+\ff{\left(\frac{z^2}{2} \right)^{\frac{n\beta}{2}}}\right) 
	\\&=\left(\frac{z^2}{2} \right)^{\frac{n\beta}{2}} \left( \frac{n\beta}{2} \log \left( \frac{z^2}{2}\right) +nO\left(\beta^2 \log^2(z) \right)     -1+\ff{\left(\frac{z^2}{2} \right)^{\frac{n\beta}{2}}}\right)
	\\&=\left(\frac{z^2}{2} \right)^{\frac{n\beta}{2}} \left( \frac{n\beta}{2} \log \left( \frac{z^2}{2}\right)\right) \left( 1+o(1)\right)  . 
	\end{align*}
\end{proof}
We can now turn to the actual proof of Theorem \ref{Main_th} which is in two stages accordingly to conditions (\ref{condition_for_poisson3}) and (\ref{condition_for_poisson4}). We prove the first condition.
\begin{Prop}\label{Main_prop}
	Let $\beta := \beta_n \ll 1$ such that $\ds{n^{-2}\ll \beta \ll n^{-1} }$ and $\left( X_{i,n}\right) $ a triangular array of independent random variables with $\ds{X_{i,n}\sim \chi \left( i\beta\right)  }$ for any $1\leq i \leq n$. 

	Assume $\ds{\ff{n^2}\ll \beta \ll \ff{n\log \log \left( n\right) }  }$.
	For $x\in\R^+$, we set: \begin{align}\label{def_scaling_1}
\vfi_n(x)&:=\frac{x}{a_n}+b_n= \frac{x}{\sqrt{2\log \left( n^2 \beta \right) }}+\sqrt{2\log \left( n^2 \beta \right) } - \frac{\log \log\left(  n^2 \beta\right)  }{\sqrt{2\log \left( n^2 \beta \right) }}.
	\end{align}
	Then for $0\leq x<+\infty$, $$S_n \left( \vfi_n(x)\right) :=\sum_{i=1}^{n}\p\left( X_{i,n} \geq \vfi_n(x)\right) \xrightarrow[n\infty]{} \frac{e^{-x}}{4}.$$
	Assume $\ds{ \ff{n\log \log \left( n\right) } \ll \beta \ll \ff{n} }$.
	For $x\in\R^+$, we set: 
	 \begin{align}\label{def_scaling_2}\vfi_n(x)&:= \frac{x}{\sqrt{2\log \left( n \right) }}+\sqrt{2\log \left( n \right) } - \frac{n\beta \log \log\left(  n\right)  }{\sqrt{2\log \left( n \right) }}-\frac{\log \log  \left( n\right) }{\sqrt{2\log \left( n \right) }}-\frac{\log \log \log \left( n\right) }{\sqrt{2\log \left( n \right) }}.\end{align} Then for $0\leq x<+\infty$, $$S_n \left( \vfi_n(x)\right) :=\sum_{i=1}^{n}\p\left( X_{i,n} \geq \vfi_n(x)\right) \xrightarrow[n\infty]{}e^{-x}.$$
\end{Prop}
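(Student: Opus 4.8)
The plan is to verify hypothesis (\ref{condition_for_poisson3}) by converting each tail probability into an incomplete Gamma function, bounding it with Lemma \ref{bounds_incomplete_lem}, and then summing the resulting series in closed form via the identity (\ref{sum}). Write $w:=w_n(x)=\vfi_n(x)^2/2$. Since $X_{i,n}^2\sim\Gamma(i\beta/2,1/2)$, the rescaling in (\ref{gamma_chi_relation}) gives
\begin{align*}
\p\left(X_{i,n}\geq\vfi_n(x)\right)=\frac{\Gamma\left(\tfrac{i\beta}{2},w\right)}{\Gamma\left(\tfrac{i\beta}{2}\right)}.
\end{align*}
The common argument $w$ diverges (because $b_n\to+\infty$) while $\max_{i\leq n}(i\beta/2)=n\beta/2\to0$, so Lemma \ref{bounds_incomplete_lem} applies with $a=i\beta/2<1$ and sandwiches this tail between the two sides of (\ref{bounds_incomplete_eq}). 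As $w/(w+1-i\beta/2)\to1$ uniformly in $i$, this produces the uniform multiplicative estimate $\p(X_{i,n}\geq\vfi_n(x))=\frac{e^{-w}w^{i\beta/2-1}}{\Gamma(i\beta/2)}(1+o(1))$.

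Next I would invoke (\ref{Gamma_near_0}) in the form $1/\Gamma(i\beta/2)=\frac{i\beta}{2}(1+O(n\beta))$, the relative error being uniform in $i$ since $i\beta\leq n\beta\to0$. Factoring out the $i$-independent quantity $\frac{e^{-w}}{w}\cdot\frac{\beta}{2}$ reduces the problem to the geometric-type series $\sum_{i=1}^n i\,u^i$ with $u:=w^{\beta/2}$, which by (\ref{sum}) equals $\frac{u\,\Lambda_n(\vfi_n(x))}{(1-u)^2}$, the numerator being precisely the quantity $\Lambda_n$ of Lemma \ref{lambda_estimate}. Because $\frac{\beta}{2}\log w\to0$ one has $(1-u)^2=(\frac{\beta}{2}\log w)^2(1+o(1))$ and $u\to1$, so the whole sum collapses to an explicit expression governed by Lemma \ref{lambda_estimate}.

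It then remains to insert (\ref{lambda_first_case})--(\ref{lambda_second_case}) together with the explicit scalings. In the regime $n^{-2}\ll\beta\ll(n\log\log n)^{-1}$ the first case yields $\frac{u\Lambda_n}{(1-u)^2}\sim\frac{n^2}{2}$, hence $S_n(\vfi_n(x))\sim\frac{e^{-w}}{w}\cdot\frac{n^2\beta}{4}$; substituting (\ref{def_scaling_1}), for which $w=\log(n^2\beta)+x-\log\log(n^2\beta)+o(1)$ and thus $\frac{e^{-w}}{w}\sim\frac{e^{-x}}{n^2\beta}$, gives the limit $e^{-x}/4$. In the regime $(n\log\log n)^{-1}\ll\beta\ll n^{-1}$ the second case gives $\frac{u\Lambda_n}{(1-u)^2}\sim\frac{2n\,w^{n\beta/2}}{\beta\log w}$, so that $S_n(\vfi_n(x))\sim\frac{n\,w^{n\beta/2}e^{-w}}{w\log w}$; one then expands $w=\vfi_n(x)^2/2$ using (\ref{def_scaling_2}), where the three successive correction terms of $b_n$ are tuned so that the powers of $\log n$ carried by $w^{n\beta/2}$ and by $e^{-w}$ balance, leaving the limit $e^{-x}$.

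The delicate point is this last substitution, and it is genuinely harder in the second regime. There $u^n=w^{n\beta/2}$ no longer tends to $1$ but grows like a power of $\log n$, so the limit depends on an exact cancellation between the factor $w^{n\beta/2}$ and the factor produced by $e^{-w}$; checking it requires expanding $w$ while retaining \emph{every} contribution of order $\log\log n$ (and verifying that the squared corrections divided by $\log n$ are negligible). The second care, underlying the whole argument, is to ensure that the three multiplicative $(1+o(1))$ errors---from Lemma \ref{bounds_incomplete_lem}, from the Gamma approximation (\ref{Gamma_near_0}), and from Lemma \ref{lambda_estimate}---are \emph{uniform} in $i$, so that they factor out of the sum and do not pollute these leading-order computations.
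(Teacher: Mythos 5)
Your strategy is exactly the paper's: rewrite each tail as $\Gamma(i\beta/2,w)/\Gamma(i\beta/2)$, sandwich it with Lemma \ref{bounds_incomplete_lem}, use (\ref{Gamma_near_0}) to replace $1/\Gamma(i\beta/2)$ by $\tfrac{i\beta}{2}(1+O(n\beta))$ uniformly, sum in closed form via (\ref{sum}) to produce $\Lambda_n$, and invoke Lemma \ref{lambda_estimate}. Your treatment of the first sub-regime is correct and complete, including the final substitution $w=\log(n^2\beta)+x-\log\log(n^2\beta)+o(1)$ yielding the limit $e^{-x}/4$.

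The gap is the step you yourself single out as ``the delicate point'' and then do not carry out: the final substitution in the regime $(n\log\log n)^{-1}\ll\beta\ll n^{-1}$. Your intermediate formula $S_n(\vfi_n(x))\sim\frac{n\,w^{n\beta/2}e^{-w}}{w\log w}$ is right (it matches the paper's (\ref{last_step2}) up to constants), but inserting (\ref{def_scaling_2}) gives $w=\log n+x-n\beta\log\log n-\log\log n-\log\log\log n+o(1)$, hence $e^{-w}=\frac{e^{-x}}{n}\,e^{n\beta\log\log n}\,\log n\,\log\log n\,(1+o(1))$, while $w^{n\beta/2}=e^{\frac{n\beta}{2}\log\log n}(1+o(1))$. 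These two exponential factors carry the \emph{same} sign, so nothing balances: one obtains $S_n(\vfi_n(x))=e^{-x}e^{\frac{3}{2}n\beta\log\log n}(1+o(1))\to+\infty$, since $n\beta\log\log n\gg 1$ in this regime. For the cancellation you assert, the $n\beta\log\log n$ correction in $b_n$ would have to enter with coefficient $+\tfrac{1}{2}$, i.e.\ $b_n=\sqrt{2\log n}+\frac{n\beta\log\log n}{2\sqrt{2\log n}}-\cdots$, rather than $-1$ as in (\ref{defTH_scaling_2}). This is arguably a defect of the statement rather than of your method --- the paper's own proof performs the same substitution and silently flips the sign of the $n\beta\log\log n$ term when passing to (\ref{asymptotic1}), and even then its exponents do not sum to $-x$ --- but as a proof of the proposition as written, your argument cannot be closed at this point: the verification you postponed is precisely the one that fails.
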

\begin{proof}
	For convenience of notation, let $\ds{z:=\vfi_n(x)\gg 1}$. By Remark \ref{gamma_chi_relation_rmk}, we write the quantity of interest in terms of incomplete Gamma functions:
	\begin{align}\label{aux_equality}
	S_n(z)&=\sum_{i=1}^{n}\p\left(\chi^2\left( i\beta\right) \geq z^2 \right)  
	=\sum_{i=1}^{n}\p\left(\Gamma\left( \frac{i\beta}{2},1\right) \geq \frac{z^2}{2} \right)  
	= \sum_{i=1}^{n} \frac{\Gamma\left( \frac{i\beta}{2},\frac{z^2}{2}\right) }{\Gamma\left( \frac{i\beta}{2}\right) }.
	\end{align}
	By hypothesis $n\beta\ll 1$, we can apply Lemma \ref{bounds_incomplete_lem} to estimate (\ref{aux_equality}):
	\begin{align*}
	S_n(z)&\leq \sum_{i=1}^{n} \frac{e^{-\frac{z^2}{2}}z^{i\beta -2}}{\Gamma\left(\frac{i\beta}{2}\right) 2^{\frac{i\beta}{2}-1} }
	= \frac{2e^{-\frac{z^2}{2}}}{z^2}\sum_{i=1}^{n}\frac{\left( \frac{z^2}{2}\right)^{\frac{i\beta}{2}} }{\Gamma\left( \frac{i\beta}{2}\right) }.
		\end{align*}
		Applying the Gamma expansion (\ref{Gamma_near_0}) gives:
	\begin{align}\label{aux_upperbound2}
	S_n(z)&\leq 
	\frac{e^{-\frac{z^2}{2}}}{z^2}\sum_{i=1}^{n}\frac{i\beta \left( \frac{z^2}{2}\right)^{\frac{i\beta}{2}} }{1+\frac{i\beta}{2}\gamma_{\text{Euler}}  +  \frac{i\beta}{2}O\left( n\beta\right) }
	=  \frac{e^{-\frac{z^2}{2}}}{z^2}\frac{\beta}{1+O\left( n\beta\right) }\sum_{i=1}^{n}i\left(  \left(  \frac{z^2}{2}\right)^{\frac{\beta}{2}} \right)^i .
		\end{align}
		We analyze the last term with the identity (\ref{sum}):
		\begin{align*}
	\sum_{i=1}^{n}i\left(  \left(  \frac{z^2}{2}\right)^{\frac{\beta}{2}} \right)^i&=\frac{ \left( n\left( \frac{z^2}{2}\right) ^\frac{\beta}{2} -n-1\right) \left( \frac{z^2}{2}\right)^{\frac{n\beta}{2} + \frac{\beta}{2} }  + \left( \frac{z^2}{2}\right)^\frac{\beta}{2}  }{\left( 1-\left( \frac{z^2}{2}\right)^\frac{\beta}{2} \right)^2 }
	\\&= \ff{\left( 1-\left( \frac{z^2}{2}\right)^\frac{\beta}{2} \right)^2 }\left( \frac{z^2}{2}\right)^\frac{\beta}{2} \left(    \left( n\left( \frac{z^2}{2}\right) ^\frac{\beta}{2} -n-1\right) \left( \frac{z^2}{2}\right)^{\frac{n\beta}{2}  }  + 1  \right)    
	\\&=   \ff{\left( 1-\left( \frac{z^2}{2}\right)^\frac{\beta}{2} \right)^2 }\left( \frac{z^2}{2}\right)^\frac{\beta}{2} \left( n\left(\left( \frac{z^2}{2}\right)^\frac{\beta}{2}  -1\right)  \left( \frac{z^2}{2}\right)^{\frac{n\beta}{2}}  +  1-\left(\frac{z^2}{2} \right)^{\frac{n\beta}{2}}    \right)    
	\\&=  \frac{	 n\left(\left( \frac{z^2}{2}\right)^\frac{\beta}{2}  -1\right)  \left( \frac{z^2}{2}\right)^{\frac{n\beta}{2}}  -\left(\frac{z^2}{2} \right)^{\frac{n\beta}{2}} +1  }{\left( 1-\left( \frac{z^2}{2}\right)^\frac{\beta}{2} \right)^2 }\left( \frac{z^2}{2}\right)^\frac{\beta}{2}      .
	\end{align*}
	The decay hypothesis on $\beta$ and definitions (\ref{def_scaling_1}), (\ref{def_scaling_2}) imply that $\ds{\beta\log(z)\ll 1}$ and hence, $$\ff{\left( 1-\left( \frac{z^2}{2}\right)^\frac{\beta}{2} \right)^2 }=\ff{\left( \frac{\beta}{2} \log \left(\frac{z^2}{2} \right)  \right)^2}\left( 1+o(1)\right), \qquad\left( \frac{z^2}{2}\right)^\frac{\beta}{2}=1+o(1) .$$
	Combining everything in (\ref{aux_upperbound2}), we get: $$S_n(z)\leq  \frac{4e^{-\frac{z^2}{2}}}{z^2} \frac{\Lambda_n(z)}{\beta \log^2(\frac{z^2}{2})}\left( 1+o(1)\right)   ,$$ with $$	\Lambda_n(z) :=n\left(\left( \frac{z^2}{2}\right)^\frac{\beta}{2}  -1\right)  \left( \frac{z^2}{2}\right)^{\frac{n\beta}{2}}  -\left(\frac{z^2}{2} \right)^{\frac{n\beta}{2}} +1 . $$
	
	We turn to the lower bound. For any $\ds{a\in [0,1]}$ and $z\gg 1$, we have: \begin{align}\label{rmk_on_lowerbound}
	\frac{z}{z+1-a}&\geq \frac{z}{z+1} = 1+o(1).
	\end{align} Applying the lower bound (\ref{bounds_incomplete_eq}) on (\ref{aux_equality}) and according to (\ref{rmk_on_lowerbound}), we get:
	\begin{align*}
	S_n(z)&\geq \sum_{i=1}^{n} \frac{z}{z+1-\frac{i\beta}{2}}\frac{e^{-\frac{z^2}{2}}z^{i\beta -2}}{\Gamma\left(\frac{i\beta}{2}\right) 2^{\frac{i\beta}{2}-1} }
	\\&=\left(1+o(1) \right) \frac{2e^{-\frac{z^2}{2}}}{z^2}\sum_{i=1}^{n}\frac{\left( \frac{z^2}{2}\right)^{\frac{i\beta}{2}} }{\Gamma\left( \frac{i\beta}{2}\right) }.
	\end{align*}
	Repeating the same steps as before, it yields: \begin{align}\label{intermediate_equivalent}
S_n(z)&=   \frac{4e^{-\frac{z^2}{2}}}{z^2} \frac{\Lambda_n(z)}{\beta \log^2(\frac{z^2}{2})}\left( 1+o(1)\right)    .
	\end{align}
	Note that we have not made any additional assumptions on $\ds{\beta\ll n^{-1}}$ yet. The next step is to estimate the $\Lambda_n(z)$ term. According to the $\beta$ regime considered, two different outcomes turn out, which is described in Lemma \ref{lambda_estimate}. 
	
	We begin with the first case and restrict to $\ds{n\beta \log z \ll 1}$, ie: $\ds{ n\beta \log\log n \ll 1 }$. Combining the asymptotic (\ref{lambda_first_case}) with (\ref{intermediate_equivalent}), it follows that:
	\begin{align}\label{last_step1}
	S_n(z)& 
	= \exp\left( -\frac{z^2}{2} +\log(n^2\beta )-2\log (z) -\log 2\right)\left(1+o(1) \right) .
	\end{align}
	From the definition (\ref{def_scaling_1}) of the scaling function, we compute the following asymptotics:
	$$-\frac{\vfi_n(x)^2}{2}= -x-\log \left( n^2 \beta \right) + \log \log (n^2\beta)+o(1)$$
	$$ \log \vfi_n(x) =\log \left( \sqrt{2\log \left( n^2 \beta \right) } \left( 1+o(1)\right) \right) = \ff{2} \log \log \left( n^2 \beta \right) +\ff{2}\log 2 +o(1).$$
	And finally, substituting in (\ref{last_step1}), the result yields:
	$$S_n(z) = \frac{e^{-x}}{4}(1+o(1)). $$
	
	We consider the second regime case for $\beta$ and assume in the sequel that $\ds{ \ff{n\log \log \left( n\right) } \ll \beta \ll \ff{n} }$.
	The main difference lies in Lemma \ref{lambda_estimate}. We substitute the asymptotic (\ref{lambda_second_case}) in (\ref{intermediate_equivalent}): \begin{align}\label{last_step2}
	S_n(z)&=  4\exp\left( -\frac{z^2}{2} +n\beta \log(z)-2\log (z) - \log\log(z) +\log (n)\right)\left(1+o(1) \right) .
	\end{align}
	Again, we lastly compute from (\ref{def_scaling_2}) the required asymptotics:
	\begin{align}\label{asymptotic1}
-\frac{z^2}{2}& = -x-\log(n) -n\beta \log\log(n) +\log\log(n)+\log\log\log(n)+o(1) 
	\end{align}
	\begin{align}\label{asymptotic2}
	\log(z)&=\log \left(\sqrt{2\log(n)}(1+o(1)) \right)=\ff{2}\log\log(n)+\ff{2}\log(2)+o(1) 
	\end{align}
	$$-\log\log(z) = -\log\log\log(n)-\log(2)+o(1).$$
	Combining these with (\ref{last_step2}), it leads to the desired conclusion:
	$$S_n(z)=e^{-x}\left( 1+o(1)\right) .$$
\end{proof}
Finally, we show condition (\ref{condition_for_poisson4}):
\begin{lem}\label{uniforme_negligeabilite_nbeta_petit}
Let $0\leq y<x<+\infty$. Under the assumptions of Proposition \ref{Main_prop}, one has: \begin{align}\label{uniform_negl_limit}
\max_{i\leq n}\left|\p\left(X_{i,n}\geq \vfi_n(x) \right) - \p\left(X_{i,n}\geq \vfi_n(y) \right)\right| &\xrightarrow[n\infty]{}0.\end{align}
\end{lem}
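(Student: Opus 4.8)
The plan is to reduce the statement to the vanishing of a single tail and then re-use the scaling asymptotics already computed in the proof of \prop{Main_prop}. Since $\vfi_n$ is increasing and $y<x$, the quantity inside the maximum of (\ref{uniform_negl_limit}) is nonnegative, and
$$\p(X_{i,n}\geq\vfi_n(y))-\p(X_{i,n}\geq\vfi_n(x))=\p(\vfi_n(y)<X_{i,n}\leq\vfi_n(x))\leq\p(X_{i,n}\geq\vfi_n(y)).$$
Thus it is enough to prove that $\ds{\max_{i\leq n}\p(X_{i,n}\geq\vfi_n(y))\xrightarrow[n\infty]{}0}$ for each fixed $y\in\R^+$; this dispenses with any refined control of the difference and only requires an upper bound on each tail.

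Next I would set $z:=\vfi_n(y)\gg1$ and rewrite each tail as an incomplete Gamma ratio exactly as in (\ref{aux_equality}), namely $\p(X_{i,n}\geq z)=\Gamma(i\beta/2,z^2/2)/\Gamma(i\beta/2)$. Applying the upper bound of (\ref{bounds_incomplete_eq}) together with the reciprocal-Gamma expansion (\ref{Gamma_near_0}), which yields $1/\Gamma(i\beta/2)=\frac{i\beta}{2}(1+o(1))$ uniformly in $i\leq n$ since $\max_{i\leq n}i\beta=n\beta\ll1$, gives the uniform bound
$$\p(X_{i,n}\geq z)\leq\frac{i\beta}{z^{2}}\,e^{-\frac{z^{2}}{2}}\Big(\frac{z^{2}}{2}\Big)^{\frac{i\beta}{2}}(1+o(1)).$$
Because $z\gg1$ makes $\log(z^{2}/2)>0$, the map $i\mapsto i\,(z^{2}/2)^{i\beta/2}$ is increasing on $\{1,\ldots,n\}$, so the maximum over $i$ is attained at $i=n$ and it suffices to show that $\frac{n\beta}{z^{2}}e^{-z^{2}/2}(z^{2}/2)^{n\beta/2}\to0$.

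It then remains to substitute the scaling asymptotics of \prop{Main_prop} in each regime. When $\ds{n^{-2}\ll\beta\ll(n\log\log n)^{-1}}$ one has $n\beta\log\log n\ll1$, hence $(z^{2}/2)^{n\beta/2}=1+o(1)$, while $z^{2}\sim2\log(n^{2}\beta)$ and $e^{-z^{2}/2}=e^{-y}\frac{\log(n^{2}\beta)}{n^{2}\beta}(1+o(1))$; the logarithms cancel and the bound becomes $\frac{e^{-y}}{2n}(1+o(1))\to0$. When $\ds{(n\log\log n)^{-1}\ll\beta\ll n^{-1}}$, the expansions (\ref{asymptotic1})--(\ref{asymptotic2}) give $z^{2}\sim2\log n$, $(z^{2}/2)^{n\beta/2}=e^{\frac{n\beta}{2}\log\log n}(1+o(1))$ and $e^{-z^{2}/2}=e^{-y}\frac{\log n\,\log\log n}{n}e^{-n\beta\log\log n}(1+o(1))$, whence the bound collapses to $\frac{\beta\log\log n}{2}e^{-\frac{n\beta}{2}\log\log n}e^{-y}(1+o(1))$, which tends to $0$ because $\beta\log\log n\to0$ and the surviving exponential is at most $1$.

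The only genuinely delicate point is securing uniformity in $i$ of the two asymptotic inputs. The reciprocal-Gamma expansion is uniform precisely because $n\beta\to0$, and the growing factor $(z^{2}/2)^{i\beta/2}$---which in the second subregime is no longer bounded since $n\beta\log\log n$ may diverge---must be isolated at its worst case $i=n$ and then checked to be still overwhelmed by $e^{-z^{2}/2}$. Verifying that $\frac{n\beta}{2}\log(z^{2}/2)=\frac{n\beta}{2}\log\log n+o(1)$, legitimate because $n\beta\ll1$ absorbs the lower-order terms of $\log(z^{2}/2)$, is the technical heart of the argument; all remaining manipulations are reuses of the computations in \prop{Main_prop}.
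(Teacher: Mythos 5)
Your proof is correct and follows essentially the same route as the paper: both arguments reduce the uniform negligibility to showing that a single quantity of the form $\frac{n\beta}{z^{2}}e^{-z^{2}/2}(z^{2}/2)^{n\beta/2}$ vanishes, using the incomplete-Gamma bound of Lemma \ref{bounds_incomplete_lem}, the uniform expansion (\ref{Gamma_near_0}), and the scaling asymptotics already established in Proposition \ref{Main_prop}. The only cosmetic difference is that you bound the increment by the full tail $\p(X_{i,n}\geq \vfi_n(y))$ and locate the worst index at $i=n$, whereas the paper bounds the density integral over $[\vfi_n(y),\vfi_n(x)]$ directly; the resulting estimates are the same.
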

\begin{proof}
For $\vfi_n$ such as in (\ref{def_scaling_1}) or (\ref{def_scaling_2}), and $i\leq n$, let us write: $$\xi_{i,n}:=  \left| \p\left(X_{i,n}\geq \vfi_n(x) \right) - \p\left(X_{i,n}\geq \vfi_n(y) \right)\right|.$$
There exists a constant $c>0$ such that:\begin{align}\label{upperbound_for_negl}
\xi_{i,n}&= \int_{\vfi_n(y)}^{\vfi_n(x)} \frac{t^{i\beta-2}e^{-\frac{t^2}{2}}}{2^{\frac{i\beta}{2}-1}\Gamma(\frac{i\beta}{2})} \mathrm{d}t
\leq \frac{\vfi_n(x)^{i\beta-2}e^{-\frac{\vfi_n(y)^2}{2}}}{2^{\frac{i\beta}{2}-1}\Gamma(\frac{i\beta}{2})}
\leq c \frac{\vfi_n(x)^{n\beta}e^{-\frac{\vfi_n(y)^2}{2}}}{\vfi_n(x)^2\Gamma(\frac{i\beta}{2})}.
\end{align}

Assume that $\ds{ \left( n\log \log \left( n\right) \right)^{-1}  \ll \beta \ll n^{-1}}$ and $\vfi_n$ as in (\ref{def_scaling_2}). In this case, considering the asymptotics (\ref{asymptotic1}), (\ref{asymptotic2}) and the fact that \begin{align}\label{asymptotic_rmk}
\vfi_n \gg 1, \qquad \min_{i\leq n}  \Gamma\left(\frac{i\beta}{2} \right) \gg 1,
\end{align} a little computation shows that the latter quantity in (\ref{upperbound_for_negl}) converges to $0$ uniformly in $i\leq n$ as $n\to +\infty$.

When $\ds{n^{-2}\ll \beta \ll \left( n\log\log n\right) ^{-1}  }$ and $\vfi_n$ as in (\ref{def_scaling_1}), the conclusion follows from (\ref{asymptotic_rmk}) and $\ds{\vfi_n(x)^{n\beta}=1+o(1)}$.
\end{proof}
\section{Proof of Theorem \ref{Main_th2}}\label{section_proof2}
From the normalization sequences $(a_n),(b_n)$ in Theorem \ref{Main_th2}, we define the scaling function $\vfi_n$ in the same way that (\ref{def_scaling_1}) by: \begin{align}\label{def_vfi_gamma}
\vfi_n(x):=\frac{x}{a_n}+b_n, \qquad x\in \R^+.
\end{align} This is an increasing homeomorphism on $\R^+$ with inverse $\ds{\vfi^{-1}_n(x)=a_n\left(x-b_n\right) }$. We note that it corresponds to the scaling function of $\ds{\frac{n}{\gamma \log\log(n)}}$ i.i.d. copies of $\chi(2\gamma)$. In other words, for $x<+\infty$, it satisfies: $$\frac{n}{\gamma\log \log(n)}\p\big(X_{n,n}\geq \vfi_n(x)\big) \xrightarrow[n\infty]{} e^{-x}.$$
Likewise Theorem \ref{Main_th}, we follow the path of Theorem \ref{Th_tool} and begin with the assumption (\ref{condition_for_poisson3}). We control the sum of chi's survival functions with the same approach. The regime $\ds{n\beta =2\gamma}$ does not allow an uniform negligibility on the chi's random variables. In Theorem \ref{Main_th}, one takes into consideration all summands, each underlying random variables contributing to the extreme value.
 This time, the $X_{i,n}$ terms with index $i$ negligible to $n$, ie: $i\ll n$, are inconsequential. In order to get meaninful at the limit, we need to explore the region of $n$. We cut off at the appropriate section of the sum, hence identifying the significant stack of chi's random variables significant to the maximum.
\begin{lem}\label{lem_exp_limit_gamma} Let $\ds{\gamma \in (0,1)}$. Let $\beta := \beta_n \ll 1$ such that $\ds{n\beta ={2\gamma} }$ and $\ds{X_{i,n}\sim \chi \left( i\beta\right)  }$, with $1\leq i \leq n$, a triangular array of independent random variables and let $\vfi_n$ defined in (\ref{def_vfi_gamma}). 
	
	Then for $0\leq x<+\infty$,
	$$\sum_{i=1}^{n} \p \big(X_{i,n}\geq \vfi_n(x)\big)\xrightarrow[n\infty]{}e^{-x}. $$
\end{lem}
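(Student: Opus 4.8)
My plan is to prove this as the exact analogue, in the regime $n\beta=2\gamma$, of Proposition~\ref{Main_prop}, i.e.\ to establish condition~(\ref{condition_for_poisson3}) of Theorem~\ref{Th_tool} with limit $G(x)=e^{-x}$. Setting $z:=\vfi_n(x)\gg 1$, Remark~\ref{gamma_chi_relation_rmk} rewrites the sum of survival functions as
\[
S_n(z)=\sum_{i=1}^n\p\big(X_{i,n}\ge z\big)=\sum_{i=1}^n\frac{\Gamma\big(\tfrac{i\beta}{2},\tfrac{z^2}{2}\big)}{\Gamma\big(\tfrac{i\beta}{2}\big)}.
\]
Because $n\beta=2\gamma$, every shape parameter obeys $\tfrac{i\beta}{2}\le\gamma<1$, so Lemma~\ref{bounds_incomplete_lem} applies to each summand; writing $w:=\tfrac{z^2}{2}\to+\infty$, the factor $\tfrac{w}{w+1-a}$ in its lower bound (here $a=\tfrac{i\beta}{2}$ is the Lemma's parameter and $w$ its argument) tends to $1$ uniformly in $i\le n$, so the two bounds pinch and
\[
S_n(z)=\frac{2e^{-w}}{z^2}\sum_{i=1}^n\frac{w^{\,i\beta/2}}{\Gamma\big(\tfrac{i\beta}{2}\big)}\,(1+o(1)),
\]
exactly as in (\ref{aux_upperbound2}). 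Up to here the argument is identical to Theorem~\ref{Main_th}.

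The decisive difference is the evaluation of $T_n:=\sum_{i=1}^n w^{i\beta/2}/\Gamma(\tfrac{i\beta}{2})$. When $n\beta\ll 1$ one had $\tfrac{i\beta}{2}\to 0$ uniformly and could replace $1/\Gamma(\tfrac{i\beta}{2})$ by $\tfrac{i\beta}{2}$ through (\ref{Gamma_near_0}) and sum via (\ref{sum}); here $\tfrac{i\beta}{2}$ sweeps the whole interval $(0,\gamma]$, so no global expansion is available and one must localise. Since the summand $g(s)=w^s/\Gamma(s)$ has logarithmic derivative $\log w-\psi(s)>0$ on $(0,\gamma]$, it is increasing, hence $T_n$ concentrates near $i=n$. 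Writing $i=n-j$ and $\rho:=\tfrac{\beta}{2}\log w=\tfrac{\gamma}{n}\log w$, and (provisionally) replacing $\Gamma(\gamma-\tfrac{j\beta}{2})$ by $\Gamma(\gamma)$, the dominant block becomes a geometric series,
\[
T_n\sim\frac{w^\gamma}{\Gamma(\gamma)}\sum_{j\ge 0}w^{-j\beta/2}=\frac{w^\gamma}{\Gamma(\gamma)}\,\frac{1}{1-e^{-\rho}}\sim\frac{w^\gamma}{\Gamma(\gamma)}\,\frac1\rho=\frac{2\,w^\gamma}{\beta\,\Gamma(\gamma)\log w},
\]
where $\rho\to 0$ while $n\rho=\gamma\log w\to+\infty$ guarantees that the truncation error $e^{-n\rho}=w^{-\gamma}$ is harmless.

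Feeding this back and using $\beta=2\gamma/n$ yields the clean reduction
\[
S_n(z)\sim\frac{2e^{-w}}{2w}\cdot\frac{2\,w^\gamma}{\beta\,\Gamma(\gamma)\log w}=\frac{n}{\gamma\log w}\cdot\frac{w^{\gamma-1}e^{-w}}{\Gamma(\gamma)}\sim\frac{n}{\gamma\log\log n}\,\p\big(\chi(2\gamma)\ge z\big),
\]
because $w\sim\log n$ forces $\log w\sim\log\log n$ and $w^{\gamma-1}e^{-w}/\Gamma(\gamma)\sim\p(\chi(2\gamma)\ge z)$ by Lemma~\ref{bounds_incomplete_lem}. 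This is precisely the heuristic announced before the statement: $S_n$ behaves like the expected number of exceedances among $\tfrac{n}{\gamma\log\log n}$ i.i.d.\ copies of $\chi(2\gamma)$. It then remains to substitute $z=\vfi_n(x)$ from (\ref{def_vfi_gamma}) and to expand $w=\tfrac{z^2}{2}$ with the scaling sequences of Theorem~\ref{Main_th2}, in the spirit of (\ref{last_step1})–(\ref{asymptotic2}), checking that the powers of $\log$ and the constant $\Gamma(\gamma)$ cancel so that $S_n(z)\to e^{-x}$.

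The main obstacle is the localisation I glossed over: contrary to Theorem~\ref{Main_th} there is no uniform smallness of $i\beta$, so one must justify replacing $\Gamma(\tfrac{i\beta}{2})$ by $\Gamma(\gamma)$ on the relevant block and bound the discarded lower part. The subtlety is that the effective window must play two roles at once: it must contain $\asymp\tfrac{n}{\gamma\log\log n}$ indices (this produces the crucial factor $1/\rho\sim 1/\log w$, without which $S_n$ would be far too small), yet it must be short enough that $\tfrac{i\beta}{2}$ stays within $o(1)$ of $\gamma$. Both hold because the window is $s\in[\gamma-\tfrac{1}{\log w},\gamma]$, whose length $1/\log w\to 0$ keeps $\Gamma(\tfrac{i\beta}{2})=\Gamma(\gamma)(1+o(1))$, while the number of grid points it contains, $\tfrac{1/\log w}{\beta/2}=\tfrac{n}{\gamma\log w}\to+\infty$, is large. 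For the complementary indices $i\le n-m_n$ both $w^{i\beta/2}$ is smaller and $1/\Gamma(\tfrac{i\beta}{2})$ decays, so a crude bound shows their total contribution is $o(T_n)$; making the cut-off $m_n$ explicit and pushing these two estimates through is the only genuinely technical point.
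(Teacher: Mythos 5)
Your overall strategy is the same as the paper's: reduce to the incomplete-Gamma sum, apply Lemma \ref{bounds_incomplete_lem} on both sides (the prefactor $\tfrac{w}{w+1-a}$ tending to $1$ uniformly so the bounds pinch), observe that the summand increases in $i$ so the sum localises near $i=n$, evaluate the dominant block as a geometric series after freezing $\Gamma(\tfrac{i\beta}{2})$ at $\Gamma(\gamma)$, and discard the rest. The paper does exactly this, cutting at $i=n(1-\mu_n)$.

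The genuine gap is your choice of cut-off. You take the window $s\in[\gamma-\tfrac{1}{\log w},\gamma]$, i.e.\ $m_n=\tfrac{n}{\gamma\log w}$ indices, so that $m_n\rho=1$ with your $\rho=\tfrac{\beta}{2}\log w$. With that choice the truncated geometric sum is $\tfrac{1-e^{-1}}{\rho}(1+o(1))$, not $\tfrac{1}{\rho}$, and the adjacent discarded block $n-2m_n\le i\le n-m_n$ --- on which $\Gamma(\tfrac{i\beta}{2})$ is still $\Gamma(\gamma)(1+o(1))$ --- contributes of order $\tfrac{e^{-1}-e^{-2}}{\rho}$, a positive constant fraction of $T_n$. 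So the assertion that the complementary indices contribute $o(T_n)$ fails for the window you specified, and the main term computed over it would miss a constant factor. The tension you identified has a third constraint you omitted: besides the window length tending to $0$ (so $\Gamma\to\Gamma(\gamma)$) and the number of grid points tending to infinity, one needs $m_n\rho\to+\infty$ so that the tail $e^{-m_n\rho}$ of the geometric series vanishes and the truncated sum is asymptotically the full $\tfrac1\rho$. This forces a window of length $\omega_n/\log w$ with $1\ll\omega_n\ll\log w$; the paper takes $\mu_n$ with $(\log\log n)^{-1}\ll\mu_n\ll1$ (e.g.\ $\mu_n=(\log\log n)^{-1/2}$), for which $w^{-\gamma\mu_n}\to0$ makes the kept geometric sum asymptotically full, while the discarded part is bounded by $c''\exp(-\tfrac{\gamma}{2}\mu_n\log\log n+o(1))\to0$. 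With that single adjustment your argument closes and coincides with the paper's proof.
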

\begin{proof} 
	For clarity, we will work with $z$ instead of $\vfi_n(x)$, $x\in\R^+$. By Remark \ref{gamma_chi_relation_rmk}, 
		\begin{align}\label{aux_equality2}
\sum_{i=1}^{n}\p\left(\chi\left( i\beta\right) \geq z \right)  
	=\sum_{i=1}^{n}\p\left(\Gamma\left( \frac{i\beta}{2},1\right) \geq \frac{z^2}{2} \right)  
	= \sum_{i=1}^{n} \frac{\Gamma\left( \frac{i\beta}{2},\frac{z^2}{2}\right) }{\Gamma\left( \frac{i\beta}{2}\right) }=\sum_{i=1}^{n} \frac{\Gamma\left(\gamma \frac{i}{n},\frac{z^2}{2}\right) }{\Gamma\left(\gamma \frac{i}{n}\right) }.
	\end{align}
	Since $\ds{i\frac{\gamma}{n}<1}$ for any $i\leq n$, one can estimate (\ref{aux_equality2}) with Lemma \ref{bounds_incomplete_lem}:	$$\frac{z^2}{z^2+2}e^{-\frac{z^2}{2}}\sum_{i=1}^{n}\frac{z^{\gamma\frac{i}{n}-2}}{2^{\frac{i\gamma}{2n}-1}\Gamma(\gamma\frac{i}{n})} <\sum_{i=1}^{n}\p\big(X_{i,n}\geq z\big)\leq e^{-\frac{z^2}{2}}\sum_{i=1}^{n}\frac{z^{\gamma\frac{i}{n}-2}}{2^{\frac{i\gamma}{2n}-1}\Gamma(\gamma\frac{i}{n})}.$$
	The extra-term $\ds{\frac{z^2}{z^2+2}}$ in the lower bound converges to $1$ because $z\gg 1$. Hence, it is sufficient to prove that $\ds{S_n(z):=e^{-\frac{z^2}{2}}\sum_{i=1}^{n}\frac{z^{\gamma\frac{i}{n}-2}}{2^{\frac{i\gamma}{2n}-1}\Gamma(\gamma\frac{i}{n})}}$ converges to $\ds{e^{-x}}$.
	
	For this purpose, let $\mu :=\mu_n$ such that $\ds{\ff{\log\log(n)}\ll \mu_n \ll 1}$, e.g., $\ds{\mu_n =\big(  \log \log (n) \big)^{-\ff{2}} }$.
	We split $\ds{S_n(z) =S^1_n(z)+S^2_n(z)}$ with $$S^1_n(z):= e^{-\frac{z^2}{2}}\sum_{i=1}^{n(1-\mu)}\frac{z^{\gamma\frac{i}{n}-2}}{2^{\frac{i\gamma}{2n}-1}\Gamma(\gamma\frac{i}{n})},\qquad S^2_n(z):= e^{-\frac{z^2}{2}}\sum_{i=n(1-\mu)}^{n}\frac{z^{\gamma\frac{i}{n}-2}}{2^{\frac{i\gamma}{2n}-1}\Gamma(\gamma\frac{i}{n})} ,$$ and show that: $$S^1_n(z) \xrightarrow[n\infty]{} 0,\qquad S^2_n(z) \xrightarrow[n\infty]{}e^{-x}.$$
	We begin with the main core term $S^2_n(z)$. Let us define: \begin{align}\label{m_n_M_n2}
	{m_n}^{-1} := \min_{n(1-\mu)\leq i \leq n}2^{\frac{i\gamma}{2n}} \Gamma(\gamma\frac{i}{n})\sim 2^{\frac{\gamma}{2}}\Gamma(\gamma),\quad {M_n}^{-1} := \max_{n(1-\mu)\leq i \leq n}2^{\frac{i\gamma}{2n}} \Gamma(\gamma\frac{i}{n})\sim 2^{\frac{\gamma}{2}}\Gamma(\gamma) .
	\end{align}
	Consequently, \begin{align}\label{consequently}
M_n\frac{2e^{-\frac{z^2}{2} }}{z^2}\sum_{i=n(1-\mu)}^{n}\left(\frac{z^2}{2} \right) ^{\gamma\frac{i}{2n}}\leq S^2_n(z) \leq m_n\frac{2e^{-\frac{z^2}{2} }}{z^2}\sum_{i=n(1-\mu)}^{n}\left(\frac{z^2}{2} \right) ^{\gamma\frac{i}{2n}}.
	\end{align}
	We compute the geometric sum in the previous line:
	\begin{align}\label{compute_geometric_sum}
\frac{2e^{-\frac{z^2}{2} }}{z^2}\sum_{i=n(1-\mu)}^{n}\left(\frac{z^2}{2} \right) ^{\gamma\frac{i}{2n}}= e^{-\frac{z^2}{2} }\frac{\left( \frac{z^2}{2}\right) ^{\frac{\gamma}{2}-1} \left( 1-\left(\frac{z^2}{2} \right)^{-\frac{\gamma \mu}{2}}\right) }{\left( \frac{z^2}{2}\right)^{\frac{\gamma}{2n}}-1} .
	\end{align}
	 In order to estimate (\ref{compute_geometric_sum}), we use Taylor expansion, the hypothesis on $\mu$, the definition (\ref{def_vfi_gamma}) and (\ref{m_n_M_n2}), which provide the asymptotics: \begin{align}\label{intermediate_asymptotics_gamma}
 \left(\frac{z^2}{2} \right)^{-\frac{\gamma \mu}{2}}\ll 1,\qquad \left( \frac{z^2}{2}\right)^{\frac{\gamma}{2n}}-1 =  \frac{\gamma}{2n}\log\log(n)\left( 1+o(1) \right)
	 \end{align}
	Combining (\ref{m_n_M_n2}), (\ref{compute_geometric_sum}) and (\ref{intermediate_asymptotics_gamma}) with the bound (\ref{consequently}), it provides the asymptotic of $\ds{S^2_n(z)}$: \begin{align}\label{S^2_n(z)2} 
	\exp\left(-\frac{z^2}{2}+\left( \frac{\gamma}{2}-1\right) \log\left( \frac{z^2}{2}\right) +\log(n)-\log^{(3)}(n)-\log\left(\frac{2^{\frac{\gamma}{2}-1}\gamma}{\Gamma(\gamma)} \right)+o(1)\right) .
	\end{align}
	Finally, together with (\ref{def_vfi_gamma}) and the fact that
 \begin{align}\label{log(z^2/2)}
	\left( \frac{\gamma}{2}-1\right) \log\left( \frac{z^2}{2}\right)=\left( \frac{\gamma}{2}-1\right) \log\log(n) +\left( \frac{\gamma}{2}-1\right) \log(2)+o(1)
 \end{align}
	\begin{align}\label{-z^2/2}
-\frac{z^2}{2} =-x-\log (n)+\log^{(3)}(n)-\left( \frac{\gamma}{2}-1\right) \log\log({n})+\log\left(\gamma 2^{-\frac{\gamma}{2}}\Gamma(\gamma)\right)+o(1),
	\end{align}	
	the expression (\ref{S^2_n(z)2}) yields the claim.

	Regarding the first sum $S^1_n(z)$, there exists a numerical constant $c>0$ such that: $$ S^1_n(z)=e^{-\frac{z^2}{2}}\sum_{i=1}^{n(1-\mu)}\frac{z^{\gamma\frac{i}{n}-2}}{2^{\frac{i\gamma}{2n}-1}\Gamma(\gamma\frac{i}{n})} \leq c\frac{2e^{-\frac{z^2}{2} }}{z^2}\sum_{i=1}^{n(1-\mu)}\left(\frac{z^2}{2} \right) ^{\frac{i\gamma}{2n}} = c\frac{2e^{-\frac{z^2}{2}}}{z^2} \frac{\left( \frac{z^2}{2} \right) ^{ \frac{\gamma}{2}(1-\mu)}-\left( \frac{z^2}{2} \right) ^{\frac{\gamma}{2n}}}{\left(\frac{z^2}{2} \right) ^{\frac{\gamma}{2n}}-1}.$$
	By Taylor expansion, we find another constant $c'>0$ such that the previous upper bound asymptoticaly equals to:
	$$c'e^{-\frac{z^2}{2}} \frac{2n}{\gamma\log\log\left( \frac{z^2}{2}\right) }e^{\left( \frac{ \gamma}{2}-1-\frac{\gamma}{2}\mu\right) \log\left( \frac{z^2}{2}\right)  }\left( 1+o(1)\right).$$ Using (\ref{log(z^2/2)}) and (\ref{-z^2/2}), for some constant $c''>0$, we have:
	$$0\leq  S^1_n(z)\leq c''\exp\left( -\frac{\gamma}{2}\mu \log\log(n)  +o(1)\right) .$$
	The hypothesis on $\mu$ is precisely that $\mu \log\log(n)\gg 1$, thus the proof is complete.
\end{proof}
To conclude the proof, it only remains to show:
\begin{lem}
	Given the assumptions of Theorem \ref{Main_th2} and $\vfi_n$ defined in (\ref{def_vfi_gamma}), for any $0\leq x<y<+\infty$,  one has: $$ \max_{i\leq n}\p\big(\vfi_n(x)\leq X_{i,n}\leq \vfi_n(y)\big)\xrightarrow[n\infty]{}0.$$ 
\end{lem}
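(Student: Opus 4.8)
The plan is to verify the uniform negligibility condition (\ref{condition_for_poisson4}) directly, mirroring the proof of Lemma \ref{uniforme_negligeabilite_nbeta_petit} but in the present regime $n\beta = 2\gamma$. Since $\vfi_n$ is increasing, for $x<y$ we have $\vfi_n(x)<\vfi_n(y)$, so the crude inclusion
$$\p\big(\vfi_n(x)\leq X_{i,n}\leq \vfi_n(y)\big)\leq \p\big(X_{i,n}\geq \vfi_n(x)\big)$$
reduces the problem to bounding the survival function uniformly in $i\leq n$. Writing $z:=\vfi_n(x)\gg 1$ and invoking Remark \ref{gamma_chi_relation_rmk}, this survival probability equals $\Gamma(\gamma\tfrac{i}{n},\tfrac{z^2}{2})/\Gamma(\gamma\tfrac{i}{n})$, which is exactly the summand already controlled in the proof of Lemma \ref{lem_exp_limit_gamma}.

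Next I would feed this into the upper bound of Lemma \ref{bounds_incomplete_lem}, legitimate since $\gamma\tfrac{i}{n}\leq \gamma<1$, to obtain
$$\p\big(X_{i,n}\geq z\big)\leq \frac{e^{-z^2/2}\,(z^2/2)^{\gamma i/n-1}}{\Gamma(\gamma i/n)}.$$
The dependence on $i$ is then removed by two monotonicity observations: because $z^2/2\gg 1$ and $\gamma\tfrac{i}{n}\leq \gamma$, the power is increasing in $i$, whence $(z^2/2)^{\gamma i/n-1}\leq (z^2/2)^{\gamma-1}$; and since $\Gamma$ is decreasing on $(0,1)$, one has $\Gamma(\gamma\tfrac{i}{n})\geq \Gamma(\gamma)>0$. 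Combining these gives the uniform estimate
$$\max_{i\leq n}\p\big(X_{i,n}\geq z\big)\leq \frac{e^{-z^2/2}\,(z^2/2)^{\gamma-1}}{\Gamma(\gamma)}.$$

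Finally I would substitute the asymptotics already derived for $z=\vfi_n(x)$ inside the proof of Lemma \ref{lem_exp_limit_gamma}, namely (\ref{log(z^2/2)}), which yields $z^2/2\sim \log(n)$, and (\ref{-z^2/2}), which gives the precise form of $e^{-z^2/2}$. A short computation then shows that the right-hand side above is of order $n^{-1}(\log n)^{\gamma/2}\log^{(3)}(n)$ up to the multiplicative constant $e^{-x}$, hence tends to $0$; this establishes the claim and completes the verification of (\ref{condition_for_poisson4}). I do not expect a genuine obstacle here: the only mildly delicate point is the uniformity in $i$, which is dispatched entirely by the two monotonicity remarks. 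As an alternative conceptual shortcut, one may note that $\chi(i\beta)$ is stochastically increasing in $i$, reduce the maximum to the top index $i=n$, and invoke the i.i.d. tail limit $\tfrac{n}{\gamma\log\log(n)}\p(X_{n,n}\geq \vfi_n(x))\xrightarrow[n\infty]{}e^{-x}$ recorded at the start of this section.
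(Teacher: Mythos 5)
Your proof is correct, but it takes a genuinely different route from the paper's. The paper bounds $\p\big(\vfi_n(x)\leq X_{i,n}\leq \vfi_n(y)\big)$ by writing it as the integral of the density over the interval and estimating it by the interval length times a uniform bound on the integrand, so the smallness comes jointly from the factor $\vfi_n(y)-\vfi_n(x)$ and from $e^{-\vfi_n(x)^2/2}$. You instead discard the right endpoint entirely, dominate the interval probability by the upper tail $\p\big(X_{i,n}\geq \vfi_n(x)\big)$, and show that this tail is uniformly $O\big(n^{-1}(\log n)^{\gamma/2}\log^{(3)}(n)\big)$ via the incomplete-Gamma upper bound of Lemma \ref{bounds_incomplete_lem} together with two correct monotonicity observations (the exponent $\gamma i/n-1$ is maximal at $i=n$ because $z^2/2>1$, and $\Gamma$ is decreasing on $(0,1)$ so $\Gamma(\gamma i/n)\geq \Gamma(\gamma)$). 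Your version proves the stronger fact that each individual tail is already negligible --- which is the genuine uniform-asymptotic-negligibility condition --- and it reuses the machinery already set up for Lemma \ref{lem_exp_limit_gamma}, whereas the paper's one-line integral bound is self-contained (and, as written, carries some typos: a stray $t$ in place of $\gamma$, and $\vfi_n(y)-\vfi_n(x)$ equals $(y-x)/a_n$ rather than $y-x$, though this only improves that bound). Your closing shortcut --- using that $\chi(i\beta)$ is stochastically increasing in $i$ to reduce the maximum to $i=n$ and then invoking the limit $\tfrac{n}{\gamma\log\log(n)}\p\big(X_{n,n}\geq \vfi_n(x)\big)\to e^{-x}$ --- is also valid and is arguably the cleanest argument of all.
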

\begin{proof}
	Let $x<y$ and $i\leq n$, then \begin{align*}
	\p\big(\vfi_n(x)\leq X_{i,n}\leq \vfi_n(y)\big)&= \int_{\vfi_n(x)}^{\vfi_n(y)} \frac{e^{-u}u^{t\frac{i}{n}-1}}{\Gamma(t\frac{i}{n})}du\\&\leq \ff{\min_{\R^+} \Gamma}\big(\vfi_n(y)-\vfi_n(x)\big) e^{-\vfi_n(x)} \vfi_n(y)^{t-1}
	\\ &=\ff{\min_{\R^+} \Gamma}(y-x) e^{-\vfi_n(x)} \vfi_n(y)^{t-1}
	\end{align*}
	The latter term is independent of $i\leq n$ and converges to $0$.
\end{proof}
		\bibliographystyle{plain}
	\bibliography{refextreme}
	\Addresses
\end{document}